\pgfplotsset{compat=1.13}
\def\MM{{\mathcal{M}}}
\def\AA{{\Lambda}}
\def\CC{{\mathcal{C}}}
\def\HH{{\mathcal{H}}}
\def\MM{{\mathcal{M}}}
\def\LL{{\mathcal{L}}}
\def\CCiso{{\mathcal{C}_{I}}}
\def\CCtra{{\mathcal{C}_{TI}}}
\def\SS{{\mathcal{S}}}
\def\EE{{\mathcal{E}}}
\def\FF{{\mathcal{F}}}
\def\bQ{{\mathbf{Q}}}
\def\bY{{\mathbf{Y}}}
\def\bG{{\mathbf{G}}}
\def\bW{{\mathbf{W}}}
\def\bX{{\mathbf{X}}}
\def\bS{{\mathbf{S}}}
\def\bD{{\mathbf{D}}}
\def\bRd{{\mathbf{R}}_d}
\def\bRc{{\mathbf{R}}_c}
\def\bI{{\mathbf{I}}}
\def\bL{{\mathbf{L}}}
\def\name{IPM-Proj }
\def\namedot{IPM-Proj}
\newcommand{\req}[1]{(\ref{#1})}
\newcommand{\Commentcp}[1]{\textcolor{blue}{Cristina: #1}}
\begin{document}
%
% \title{Insert your title here%\thanks{Grants or other notes
% %about the article that should go on the front page should be
% %placed here. General acknowledgments should be placed at the end of the article.}
% }

%\title{Projections onto the cone of negative semidefinite symmetric tensors}
\title{A semidefinite programming approach for the projection onto the cone of negative semidefinite symmetric tensors with applications to solid mechanics}

%\subtitle{Do you have a subtitle?\\ If so, write it here}

\titlerunning{A SDP approach for the projection onto $Sym^{-}$}        % if too long for running head

\author{Cristina Padovani \and    Margherita Porcelli
}

%\authorrunning{Short form of author list} % if too long for running head

\institute{C. Padovani \at
              Institute of Information Science and Technologies ``A. Faedo'', ISTI--CNR, Via Moruzzi 1 Pisa, Italy\\
              \email{cristina.padovani@isti.cnr.it}           %  \\
%             \emph{Present address:} of F. Author  %  if needed
           \and
           M. Porcelli \at
             Dipartimento  di Matematica, AM$^2$, Universit\`a di Bologna,
           Piazza di Porta San Donato 5, 40126 Bologna, Italia.
         \at  Institute of Information Science and Technologies ``A. Faedo'', ISTI--CNR, Via Moruzzi 1 Pisa, Italy\\
           \email{margherita.porcelli@unibo.it}
}

\date{Received: date / Accepted: date}
% The correct dates will be entered by the editor

\maketitle

\begin{abstract}
We propose an algorithm for computing the projection of a symmetric second-order tensor onto the cone of negative semidefinite symmetric tensors with respect to the inner product defined by an assigned positive definite symmetric fourth-order tensor $\CC$.  The projection problem is written as a semidefinite programming problem and an algorithm based on a primal-dual path-following interior point method coupled with a  Mehrotra's predictor-corrector approach is proposed. Implementations based on
direct methods are theoretically and numerically investigated taking into account tensors $\CC$ arising in the modelling of masonry-like materials. 

%In particular, we implement two of the most used symmetrization schemes, that is the NT and the AHO direction focusing on the solution of the
%linear system by direct methods. The two approach are then compared, analysing the conditioning of the linear systems deriving from the NT and AHO schemes,  and validated on some application oriented numerical tests.

\keywords{Conic projection \and negative semidefinite tensors \and quadratic semidefinite programming \and interior point methods}

\end{abstract}

\section{Introduction}
\label{sec1} \vspace{-2pt}
Matrix nearness problems are introduced in \cite{Higham1} where for a fixed matrix $A$, the problem of finding the nearest member of some given class of matrices is addressed, where distance is measured in a
matrix norm. The problem of approximating a matrix with a positive semidefinite symmetric matrix is ubiquitous in scientific computing, see e.g. \cite{H_M_2012,Higham1,Higham3,Higham2}. In particular, in \cite{Higham3}, numerical methods are proposed to calculate the minimum distance between a matrix $A$ and a positive definite symmetric matrix $X$, considering the Frobenius norm and the 2-norm. 
Motivated by relevant applications in finance industry, more recent contributions \cite{Anjos,H_M_2012,Higham2,Malick,QiSun}  deal with the computation of the projection of a symmetric matrix onto the set of correlation matrices, namely positive semidefinite symmetric matrices with ones on the diagonal.

In this paper, we are concerned with the computation of the projection of a symmetric second-order tensor onto the cone of negative semidefinite symmetric tensors with respect to the inner product defined by an assigned positive definite symmetric fourth-order tensor $\CC$. In particular, for a given symmetric tensor $\mathbf{D}$, we want to minimize the distance $\phi(\mathbf{Y})=\parallel {\mathbf{D}-\mathbf{Y}}\parallel_{\CC}^{2}$ between  $\mathbf{D}$ and $\mathbf{Y}$, with $\mathbf{Y}$ belonging to the cone $Sym^{-}$ of negative semidefinite symmetric tensors.
Problems similar to the minimization of $\phi(\mathbf{Y})$ in $Sym^{-}$ have been addressed in \cite{H_M_2012}, where numerical methods for conic projection problems are presented.
In particular, in \cite{H_M_2012}, the problem of minimizing the standard Frobenius distance between a given matrix $C$ and a symmetric positive semidefinite matrix $X$ and satisfying a further equality linear constraint is introduced. Then, in \cite{H_M_2012} (equation (7)) the focus is on the more general problem of finding the projection of a vector $c$ onto the intersection of a convex closed set and a convex polyhedron defined by affine inequalities,
with respect to
$\parallel x\parallel_{Q}^{2}=x\cdot Qx$, the norm associated to a positive definite matrix $Q$. In particular, equation (7) is
the vector counterpart of our tensor problem in the cone $Sym^{-}$.

The relevance of the projection problem proposed in the present paper is twofold. Firstly, it is a generalization of the problem dealt with in \cite{Higham3}, as in the place of the Frobenius norm, we consider a norm induced by a scalar product defined by an assigned fourth-order tensor $\CC$.
Secondly, its solution allows to model masonry-like materials \cite{GDP,SPRING}.
In fact, the stress tensor for materials that do not withstand tension can be obtained by suitably projecting the strain tensor onto the cone of negative semidefinite symmetric tensors. Solving such a projection problem has a crucial role in solid mechanics and civil engineering applications, as it allows to calculate a solution to the equilibrium problem of masonry constructions. Fourth-order tensor $\CC$ contains the mechanical properties of the masonry material and can take different forms depending on the anisotropy of the material\cite{SPRING}. Apart from the isotropic case, for which the explicit solution to the projection problem is available, in the general anisotropic case numerical methods are necessary to calculate the approximate solution.
Rather than designing efficient algorithms for large scale problems as done in most of the literature \cite{Anjos,H_M_2012,Malick}, the focus of this work is providing an accurate and cost-effective numerical procedure for small size projection, as done in \cite{HN}, where an algorithm to compute the polar decomposition of a $3\times 3$ matrix is proposed. This framework is 
strongly motivated by the application in the field of solids mechanics and, in particular, on masonry-like materials, where the dimension of the addressed problem is very low. Indeed the considered second-order tensors are linear functions from a three-dimensional vector space into itself.
Moreover, the accurate solution of a projection problem is required for each Gauss points of each element constituting the finite element discretization of the masonry structure under examination. Thus, similarly to \cite{HN}, our algorithm must solve a large number of small size problems accurately. 

Inspired by the works \cite{Anjos,QSDP2,QSDP1} for the large-scale setting, we propose to
solve the projection problem by using a semidefinite programming (SDP) approach and
developing an interior point algorithm that exploits the peculiarities of the problem
under consideration.  Interior point methods stand out as reliable algorithms which enjoy enviable convergence properties and usually provide accurate solutions within reasonable time. Several proposals are available in the literature for both general and
application oriented SDPs, see e.g. \cite{bgp1,bgp2,kocvara,SDLCP1,ToddSurvey,QSDP1} and references therein. 

In this work we first show that our projection problem can be reformulated as a special monotone semidefinite linear complementarity problem (SDLCP) and observe that it is equivalent to a convex Quadratic SemiDefinteProgramming (QSDP) problem where there are no linear equality constraints. Then we describe a primal-dual path-following interior point that uses Mehrotra's predictor-corrector steps \cite{QSDP2,QSDP1,SDTP3-guide} and adapted it to our QSDP.
In particular, we considered two of the most used symmetrization schemes, that is
the Nesterov-Todd (NT) direction and the Alizadeh-Haeberly-Overton (AHO) one \cite{ToddDir}, and focused on the solution of the
linear system by direct methods. As a major contribution of this work we show that, when a very accurate solution is required, the use of the popular  NT direction yields  highly ill-conditioned Schur complement linear systems that may prevent the computation of an accurate solution. On the other hand, we provide
a formulation of the Newton's equation with a much favourable condition
number when the AHO direction is used and the $\CC$ has a special form of  interest in solid mechanics. For this case, a theoretical insight of this behaviour is given.
The addressed theoretical issues are validated on a number of application oriented numerical tests. 

As an outcome of the obtained good numerical results, the proposed algorithm will be implemented in the finite element code NOSA-ITACA \cite{nosaitaca} developed at ISTI-CNR for the structural analysis of masonry constructions. The implementation and the application of the code to a case study  will be the subject of future work.

This paper is organized as follows. In Section \ref{roba}, we list several notions and definitions to be used in the paper that attempt to merge standard notation used in solid mechanics and in SDPs. Section \ref{sec2} describes the projection problem in the space of symmetric tensors equipped with the scalar product associated with a positive definite symmetric fourth-order tensor $\CC$. Some results deriving from the minimum norm theorem are proved, including the possibility of expressing the projection problem as a complementarity problem. In Section \ref{Special}, some special forms of $\CC$ of interest in solid mechanics are presented, focusing on isotropic and transversely isotropic $\CC$. In particular, the explicit expression of the projection for isotropic $\CC$ is provided. The transversely isotropic case can not be solved explicitly, and the projection is calculated only for a restricted class of tensors $\mathbf{D}$. 
Section \ref{numeric} contains the description of the   primal-dual path-following interior point algorithm adopted for the efficient and accurate solution of the complementarity problem associated with the projection problem.
Section \ref{test} is devoted to the description of the numerical experience.
First the implementation of the proposed algorithm is described. Then the data sets are introduced and numerical results are discussed. Conclusions are drawn in Section \ref{sec:end}.

\section{Notations and preliminaries}\label{roba}
Let $\mathcal{V}$  be a real vector space of dimension 3 with the inner product $\cdot $. Let $Lin$ be the set of all second-order tensors (a second-order tensor is a linear application from $\mathcal{V}$ in itself) with the inner product $%
\mathbf{A}\bullet \mathbf{B}=tr(\mathbf{A}^{T}\mathbf{B})$ for any $\mathbf{A},%
\mathbf{B}\in $ $Lin$, with $\mathbf{A}^{T}$ the transpose of
$\mathbf{A}$ and let $\parallel \mathbf{A} \parallel = \sqrt{\mathbf{A} \bullet \mathbf{A}}$ be the associated Frobenius norm.

For $Sym$ the subspace of symmetric tensors, $Sym^{-}$,
$Sym^{+}$ and $Sym^{++}$ are the
sets of all negative semidefinite, positive semidefinite  and positive definite elements of $Sym$, respectively. $Orth$ denotes the group of all orthogonal tensors.

Given the tensors $\mathbf{A}$ and $\mathbf{B}$, we denote by $%
\mathbf{A}\otimes \mathbf{B}$ the fourth-order tensor (a fourth-order tensor is a linear application from $Lin$ in itself) defined by
$$\mathbf{A} \otimes \mathbf{B}(\mathbf{H})=(\mathbf{B}\bullet \mathbf{H})\mathbf{A}
 \,\,\,\textup{for all}\,\,\,\mathbf{H}\in Lin, $$
by  $\mathbf{A}\odot \mathbf{B}$ the fourth-order tensor defined by
\begin{equation*}
\mathbf{A}\odot \mathbf{B}(\mathbf{H})=\frac{1}{2}(\mathbf{B} \mathbf{H}\mathbf{A}^T+\mathbf{A} \mathbf{H}^T\mathbf{B}^T), \,\,\,\textup{for all}\,\,\,\mathbf{H}\in Lin, \label{symK}
\end{equation*}
and by $\mathbb{I}_{Sym}$ the fourth-order
identity tensor on $Sym$.
For $\mathbf{a}$ and $\mathbf{b}$ vectors, the dyad $\mathbf{a}%
\otimes \mathbf{b}$ is defined by $\mathbf{a}\otimes \mathbf{bh}=(\mathbf{b}%
\cdot \mathbf{h})\mathbf{a},$ for any vector $\mathbf{h}$.

Let $\CC$ be a fourth-order tensor from $Sym$ to $Sym$.
Let us assume that  $\CC$ is symmetric, i.e.
\begin{equation}
\mathbf{A}\bullet \CC(\mathbf{B})=\mathbf{B}\bullet
\CC(\mathbf{A}),\,\,\,\,\textup{for all}\,\,\,\,
\mathbf{A},\mathbf{B}\in Sym, \label{symmc}
\end{equation}
\noindent and positive definite on $Sym$, i.e.
\begin{equation}
\mathbf{A}\bullet \CC(\mathbf{A})> 0\,\,\,\,\textup{for
all}\,\,\,\, \mathbf{A}\in Sym,\,\,\,\, \mathbf{A}\neq 0.
\label{posc}
\end{equation}
Because of (\ref{posc}) and (\ref{symmc}) $\CC$ is invertible and its inverse $\CC^{-1}$ is symmetric and positive definite. Moreover, properties (\ref{posc}) and (\ref{symmc}) allow defining the following inner product  $\circ $ on $Sym$,
\begin{equation}
\mathbf{A}\circ \mathbf{B}=\mathbf{A}\bullet \CC(\mathbf{B}),\,\,\,\,\textup{for
}\,\,\,
\mathbf{A},\mathbf{B}\in Sym,
\label{ipC}
\end{equation}
and the associated $\CC$-norm
\begin{equation*}
\parallel \mathbf{A}\parallel_{\CC}^{2}=\mathbf{A}\circ \mathbf{A}.
\label{Cnorm}
\end{equation*}
Let $\mathsf{P}= (\mathbf{p}_{1},\mathbf{p}_{2},\mathbf{p}_{3})$ be an orthonormal basis of  $\mathcal{V}$. For $\mathbf{D}\in Sym$ and $\CC$ symmetric and positive definite, the components $D_{ij}$  of $\mathbf{D}$, $\CC_{ijkl}$ of $\CC$ and $\CC_{ijkl}^{-1}$ of $\CC^{-1}$ with respect to $\mathsf{P}$ are
\begin{equation*}
D_{ij}=\mathbf{p}_{i} \cdot \mathbf{D} \mathbf{p}_{j},  \textup{ \ } \textup{with} \textup{ \ } D_{ij}=D_{ji}, \,\,\, i,j=1,2,3
\label{Eij}
\end{equation*}
\begin{equation*}
\CC_{ijkl}=\mathbf{p}_{i}  \cdot \CC(\frac{\mathbf{p}_{k} \otimes  \mathbf{p}_{l}+\mathbf{p}_{l} \otimes  \mathbf{p}_{k}} {2}) \mathbf{p}_{j}
\label{Cijkl},  \textup{ \ \ }  i,j,k,l=1,2,3,
\end{equation*}
\begin{equation*}
\CC_{ijkl}^{-1}=\mathbf{p}_{i} \cdot \CC^{-1}(\frac{\mathbf{p}_{k} \otimes  \mathbf{p}_{l}+\mathbf{p}_{l} \otimes  \mathbf{p}_{k}} {2}) \mathbf{p}_{j}
\label{C-1ijkl},  \textup{ \ \ }  i,j,k,l=1,2,3.
\end{equation*}
These components are reported in the Appendix for the special forms of $\CC$ described in Section \ref{Special}.

Because $\CC$ and $\CC^{-1}$ are symmetric fourth-order tensors from $Sym$ to $Sym$, their components satisfy the following equalities
\begin{equation}
\CC_{ijkl}=\CC_{klij}, \textup{ \ \ } \CC_{ijkl}=\CC_{jikl}=\CC_{ijlk}, \,\,\, i,j,k,l=1,2,3,
\label{simmetrieC}
\end{equation}
\begin{equation}
\CC_{ijkl}^{-1}=\CC_{klij}^{-1}, \textup{ \ \ } \CC_{ijkl}^{-1}=\CC_{jikl}^{-1}=\CC_{ijlk}^{-1}, \,\,\,\, i,j,k,l=1,2,3.
\label{simmetrieC-1}
\end{equation}
With these notations, for a given symmetric tensor $\mathbf{A}$, the symmetric tensor
\begin{equation*}
\mathbf{B}=\CC(\mathbf{A})
\label{B=CA}
\end{equation*}
has components
\begin{equation}
B_{ij}=\sum_{kl} \CC_{ijkl}A_{kl}=\sum_{k} \CC_{ijkk}A_{kk}+2 \sum_{k<l} \CC_{ijkl}A_{kl}.
\label{funct4}
\end{equation}
It may be convenient to adopt a vector notation in the place of the tensor notation described above. Thus, a symmetric tensor
$\mathbf{A}$ is replaced by the vector $\mathbf{a}$ with the six components
\begin{equation*}
(A_{11}, \sqrt{2}A_{12}, A_{22}, \sqrt{2}A_{13}, \sqrt{2}A_{23}, A_{33})^T,
\label{ab}
\end{equation*}
such that $\mathbf{a}\cdot \mathbf{a}=\mathbf{A}\bullet \mathbf{A}=tr(\mathbf{A}^2)$. Then, for $\mathbf{b}$ the vector associated to $\mathbf{B}$, from
(\ref{funct4}) we get
\begin{equation*}
\mathbf{b}=\mathbf{\widetilde C}\mathbf{a},
\label{b=Ca}
\end{equation*}
where the matrix of the components of $\mathbf{\widetilde C}$ is
\begin{equation*}
\begin{pmatrix}
\CC_{1111}&\sqrt{2}\CC_{1112}&\CC_{1122}&\sqrt{2}\CC_{1113}&\sqrt{2}\CC_{1123}&\CC_{1133}\\
\sqrt{2}\CC_{1211}&2\CC_{1212}&\sqrt{2}\CC_{1222}&2\CC_{1213}&2\CC_{1223}&\sqrt{2}\CC_{1233}\\
\CC_{2211}&\sqrt{2}\CC_{2212}&\CC_{2222}&\sqrt{2}\CC_{2213}&\sqrt{2}\CC_{2223}&\CC_{2233}\\
\sqrt{2}\CC_{1311}&2\CC_{1312}&\sqrt{2}\CC_{1322}&2\CC_{1313}&2\CC_{1323}&\sqrt{2}\CC_{1333}\\
\sqrt{2}\CC_{2311}&2\CC_{2312}&\sqrt{2}\CC_{2322}&2\CC_{2313}&2\CC_{2323}&\sqrt{2}\CC_{2333}\\
\CC_{3311}&\sqrt{2}\CC_{3312}&\CC_{3322}&\sqrt{2}\CC_{3313}&\sqrt{2}\CC_{3323}&\CC_{3333}\\
\end{pmatrix}.
\end{equation*}

Finally, we denote by $\lambda_{\min}(\mathbf{A})$,  and $\lambda_{\max}(\mathbf{A})$ the minimum and maximum eigenvalue of a tensor $\mathbf{A}$, respectively. Analogous notation is adopted for a fourth-order tensor $\mathcal{A}$.
%%%%%%%%%%%%%%%%%%%%%%%%%%%%%%%%%%%%%%%%%%%%%%%%%%%%%%%%%%%%%%%%%%%%%%%%%%%%%%%%%%%%%%%%%%%%%
\section{The projection problem}
\label{sec2}
Given $\mathbf{D}\in Sym$, we address the problem of minimizing the following functional
\begin{equation}
 \phi(\mathbf{Y})=\parallel {\mathbf{D}-\mathbf{Y}}\parallel_{\CC}^{2}=%
(\mathbf{D}-\mathbf{Y}) \bullet\CC(\mathbf{D}-\mathbf{Y}),
\label{functional}
\end{equation}
over the set of negative semidefinite symmetric tensors $Sym^{-}$.
Since $Sym^{-}$ is a closed convex  cone of $Sym$, in view of the minimum norm theorem \cite{FA}, there exists a unique minimum point
$\bY^*\in Sym^{-}$ for the functional (\ref{functional}). Moreover, $\bY^*$ is the minimum point of (\ref{functional}) if and only if it satisfies the variational inequality
\begin{equation*}
(\mathbf{D}-\bY^*)\circ(\mathbf{Y}-\bY^*)\leq0,\textup{ \ \ }\forall\ \mathbf{Y}\in Sym^{-},
\label{varineq}
\end{equation*}
which, expressed in terms of the inner product $\bullet$, reads
\begin{equation}
\CC(\mathbf{D}-\bY^*)\bullet(\mathbf{Y}-\bY^*)\leq0,\textup{ \ \ }\forall\ \mathbf{Y}\in Sym^{-}.
\label{varineq2}
\end{equation}

The following proposition gives a characterization of the minimizer of the functional  $\phi$ in (\ref{functional}) over $Sym^{-}$.
\begin{proposition}\label{prop}
 For $\mathbf{D}\in Sym$, there exists a unique  $\bY^* \in Sym^{-}$ satisfying the following three equivalent statements

(i) $\bY^*$  minimizes functional $\phi$ in (\ref{functional})
\begin{equation*}
\phi(\bY^*)\le \phi(\mathbf{Y}) ,
\textup{ \ for each \ }\mathbf{Y}\in Sym^{-}.
\label{funmin}
\end{equation*}

(ii) $\bY^*$  satisfies the following complementarity problem

\begin{equation}
\CC(\mathbf{D}-\bY^*)\in Sym^{+},
\label{condition1}
\end{equation}%
\begin{equation}
\bY^*\bullet \CC(\mathbf{D}-\bY^*)=0.  \label{condition2}
\end{equation}%

(iii)  $\bY^*$  satisfies the variational inequality (\ref{varineq2}).

\end{proposition}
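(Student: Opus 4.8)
The plan is to treat the existence and uniqueness of $\bY^*$, together with the equivalence (i) $\Leftrightarrow$ (iii), as already granted by the minimum norm theorem invoked before the statement: indeed (iii) is exactly the variational characterization (\ref{varineq2}) of the metric projection onto the closed convex cone $Sym^-$ in the $\circ$-geometry, rewritten through (\ref{ipC}) in terms of $\bullet$. Hence the whole task reduces to establishing (ii) $\Leftrightarrow$ (iii). Throughout I would abbreviate $\mathbf{Z}=\CC(\mathbf{D}-\bY^*)\in Sym$, so that (iii) reads $\mathbf{Z}\bullet(\mathbf{Y}-\bY^*)\le 0$ for all $\mathbf{Y}\in Sym^-$, while (ii) asserts $\mathbf{Z}\in Sym^+$ together with $\bY^*\bullet\mathbf{Z}=0$.

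First I would prove (iii) $\Rightarrow$ (ii), exploiting that $Sym^-$ is a cone. Testing (\ref{varineq2}) with the admissible choices $\mathbf{Y}=0$ and $\mathbf{Y}=2\bY^*$ (both lie in $Sym^-$ since $\bY^*\in Sym^-$ and the set is a cone) yields respectively $\mathbf{Z}\bullet\bY^*\ge 0$ and $\mathbf{Z}\bullet\bY^*\le 0$, forcing the complementarity identity $\bY^*\bullet\mathbf{Z}=0$, which is (\ref{condition2}). Substituting this back collapses (iii) to $\mathbf{Z}\bullet\mathbf{Y}\le 0$ for every $\mathbf{Y}\in Sym^-$; choosing the rank-one negative semidefinite tensors $\mathbf{Y}=-\mathbf{v}\otimes\mathbf{v}$ and using $\mathbf{Z}\bullet(\mathbf{v}\otimes\mathbf{v})=\mathbf{v}\cdot\mathbf{Z}\mathbf{v}$ gives $\mathbf{v}\cdot\mathbf{Z}\mathbf{v}\ge 0$ for all $\mathbf{v}\in\mathcal{V}$, i.e. $\mathbf{Z}\in Sym^+$, which is (\ref{condition1}).

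The converse (ii) $\Rightarrow$ (iii) is then a short substitution: for arbitrary $\mathbf{Y}\in Sym^-$ the complementarity (\ref{condition2}) gives $\mathbf{Z}\bullet(\mathbf{Y}-\bY^*)=\mathbf{Z}\bullet\mathbf{Y}$, and it remains to check that $\mathbf{Z}\bullet\mathbf{Y}\le 0$ whenever $\mathbf{Z}\in Sym^+$ and $\mathbf{Y}\in Sym^-$. This is the one genuinely analytic point, and I would settle it by writing $\mathbf{Z}=\mathbf{Z}^{1/2}\mathbf{Z}^{1/2}$ and using the cyclic property of the trace, $\mathbf{Z}\bullet\mathbf{Y}=tr(\mathbf{Z}\mathbf{Y})=tr(\mathbf{Z}^{1/2}\mathbf{Y}\mathbf{Z}^{1/2})$; the congruence $\mathbf{Z}^{1/2}\mathbf{Y}\mathbf{Z}^{1/2}$ is negative semidefinite, so its trace is $\le 0$.

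The two rank-one and congruence facts used above amount to the identification of the dual (polar) cone of $Sym^-$ under $\bullet$ with $Sym^+$, and this cone-duality is the only nontrivial ingredient; everything else is linear bookkeeping on top of the already-established minimum norm theorem. I expect the main obstacle to be purely presentational, namely keeping the two inner products $\bullet$ and $\circ$ straight so that the positive definiteness of $\CC$ is used exactly where it is needed, in the passage from the $\circ$-projection to the $\bullet$-form (\ref{varineq2}), and nowhere else, since the cone-duality step itself is $\CC$-independent.
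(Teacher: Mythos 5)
Your proposal is correct and follows essentially the same route as the paper: existence, uniqueness and (i)$\Leftrightarrow$(iii) are delegated to the minimum norm theorem, (iii)$\Rightarrow$(\ref{condition2}) comes from testing with $\mathbf{Y}=\mathbf{0}$ and $\mathbf{Y}=2\bY^*$, and (\ref{condition1}) follows from the remaining inequality against arbitrary elements of $Sym^-$ (the paper tests with $\mathbf{Y}=\bY^*+\mathbf{Y}^\#$, you test with rank-one tensors $-\mathbf{v}\otimes\mathbf{v}$ after using (\ref{condition2}) --- the same argument up to relabeling). Your write-up is in fact more complete, since the paper leaves (ii)$\Rightarrow$(iii) as an ``easy matter'' while you supply the congruence/trace argument $tr(\mathbf{Z}^{1/2}\mathbf{Y}\mathbf{Z}^{1/2})\le 0$ that justifies it.
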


\begin{proof}
Equivalence of (i) and (iii) follows from the minimum norm theorem \cite{FA}.
It is an easy matter to show that (ii) implies (iii).
The proof that (iii) implies (ii) is based on the fact that $Sym^{-}$ is a cone, in fact from (\ref{varineq2}), for $\mathbf{Y}=\mathbf{0}$ and for $\mathbf{Y}=2\bY^*$, we get (\ref{condition2}); condition (\ref{condition1}) follows
from (\ref{varineq2}) putting $\mathbf{Y}=\bY^*+\mathbf{Y}^\#$, with $\mathbf{Y}^\#\in Sym^{-}$. \qed
\end{proof}

The minimum point $\bY^*$ of the functional (\ref{functional}) is the projection of $
\mathbf{D}$ onto $Sym^{-}$ with respect to the inner product $\circ$ in $Sym$. Letting $P_{\CC,Sym^{-}}: Sym \rightarrow Sym^{-}$ be the nonlinear function which associates to each symmetric tensor its projection onto $Sym^{-}$ with respect to the inner product (\ref{ipC}), we have, therefore that 
\begin{equation*}
\bY^*=P_{\CC,Sym^{-}}(\mathbf{D}).
\label{Projection}
\end{equation*}%
The projection $P_{\CC, Sym^{-}}$  is monotone,
Lipschitz continuous,
and homogeneous of degree 1, i.e.
\begin{equation}
P_{\CC, Sym^{-}}(\alpha \mathbf{D})=\alpha P_{\CC,Sym^{-}}(\mathbf{D}), \textup{ \ \ }\forall\mathbf{D}\in Sym \textup{ \ \ and \ \ }\forall\alpha \ge 0,
\label{homog}
\end{equation}
and satisfies
\begin{equation}
P_{\alpha \CC, Sym^{-}}(\mathbf{D})=P_{\CC,Sym^{-}}(\mathbf{D}), \textup{ \ \ }\forall\mathbf{D}\in Sym \textup{ \ \ and \ \ }\forall\alpha > 0.
\label{boh}
\end{equation}

Moreover, it is indefinitely Fr\'echet differentiable on an open dense subset of $Sym$ \cite{PS}.

From (\ref{condition1}) and (\ref{condition2}), it follows that if $\CC(\mathbf{D}) \in Sym^+$, then $\bY^*=\mathbf{0}$, and if $\mathbf{D} \in Sym^-$, then $\bY^*=\mathbf{D}$. Moreover, it is easy to prove that when tensors $\bY^*$ and $\CC(\mathbf{D}-\bY^*)$ satisfy 
(\ref{condition1}) and (\ref{condition2}), then they commute \cite{GDP,SPRING}, i.e. 
\begin{equation*}
\bY^*\CC(\mathbf{D}-\bY^*)=\CC(\mathbf{D}-\bY^*)\bY^*=\mathbf{0}.
\label{commutation}
\end{equation*}%
 Thus $\bY^*$ and $\CC(\mathbf{D}-\bY^*)$ are coaxial \cite{GDP,SPRING} that is there exists an orthonormal basis of $\mathcal{V}$ 
 constituted by eigenvectors of both  $\bY^*$ and $\CC(\mathbf{D}-\bY^*)$.
From Proposition \ref{prop}, it follows that each tensor  $\mathbf{D} \in Sym$ can be expressed
as the following  sum
\begin{equation*}
\mathbf{D}=\bY^*+\mathbf{D}-\bY^*,
\label{somma}
\end{equation*}
where $\bY^*$ belongs to $Sym^{-}$ and $\mathbf{D}-\bY^*$ belongs to $\CC^{-1}Sym^{+}$, with
\begin{equation*}
\CC^{-1}Sym^{+}=\left\{ \mathbf{A}\textup{ : } \CC( \mathbf{A}) \in Sym^{+} \right\}.
\label{set}
\end{equation*}

%%%%%%%%%%%%%%%%%%%%%%%%%%%%%%%%%%%%%%%%%%%%%%%%%%%%%%%%%%%%%%%%%%%%%%
\section{Fourth-order tensors $\CC$ in solid mechanics} \label{Special}

We now describe some possible choices of the symmetric and positive definite tensor $\CC$ 
giving details of tensors arising when modelling masonry-like materials  that motivated this work.

When the tensor $\CC$ coincides with the identity tensor, i.e.
\begin{equation*}
\CC=\mathbb{I}_{Sym},
\label{Id}
\end{equation*}
then the  $\CC$-norm coincides with the Frobenius norm.
On the other hand, given $\mathbf{C}\in Sym$ positive definite, the fourth-order tensor defined by

\begin{equation*}
\CC(\mathbf{A})=\mathbf{C} \mathbf{A} \mathbf{C}, \,\,\, \mathbf{A} \in Sym,
\label{quad}
\end{equation*}
is symmetric and positive definite and define the weighted Frobenius norm 
\begin{equation*}
\parallel \mathbf{A}\parallel_{\CC}= \parallel \sqrt{\mathbf{C}} \mathbf{A}\sqrt{\mathbf{C}} \parallel.
\label{WFnorm}
\end{equation*}
This norm was introduced in \cite{Higham2}, where the problem of finding the nearest correlation
matrix is addressed, \cite{Higham1,Higham3}.

Other expressions for $\CC$ can be chosen within the framework of solids mechanics. In particular, minimizing functional (\ref{functional}) has  interesting applications in modelling the mechanical behaviour of masonry constructions. If one adopts the constitutive equation of
masonry-like materials \cite{GDP,SPRING} to model masonry materials, it is possible to prove that the stress $\bY^*$ associated with the infinitesimal strain $\mathbf{D}$ is the projection of  $\CC(\mathbf{D})$
onto $Sym^{-}$ with respect to the inner product defined in (\ref{ipC}), with $\CC^{-1}$ in place of $\CC$. Here  $\CC$ represents the elasticity tensor of the material and can have
several expressions depending on its different degrees of anisotropy.
In order to recall some of these expressions \cite{Gurtin2,GFA,PGV} the following definition has to be introduced.  Let $\Gamma$ be a subset of $Orth$, we say that $\CC$ is invariant under $\Gamma$ if
\begin{equation}
\CC ( \mathbf{Q}\mathbf{A}\mathbf{Q}^T)=\mathbf{Q}\CC (\mathbf{A})\mathbf{Q}^T,
\textup{ \ \ }\forall  \mathbf{A}\in Sym , \mathbf{Q}\in \Gamma.
\label{Invariant}
\end{equation}
It is an easy matter to show that if $\CC$ is invariant under $\Gamma$, the same holds for  $\CC^{-1}$.

\subsection{The isotropic case}
If $\CC$ satisfies the condition (\ref{Invariant}) with $\Gamma=Orth$, then  there exist two real numbers
and $E $ and $\nu $ such that $\CC$ has the representation
\begin{equation}
\CC=\frac{E}{1+\nu} (\mathbb{I}_{Sym}+\frac{\nu}{1-2\nu} \mathbf{I}\otimes
\mathbf{I}), \label{Isotropy}
\end{equation}
where $\mathbf{I}\in Sym$ is the identity tensor \cite{Gurtin2}.
In this case, tensor $\CC$ is called isotropic and is the elasticity tensor of an isotropic elastic material with Young's modulus $E$ and the Poisson's ratio $\nu$ \cite{GFA}. Because of (\ref{posc}), $E$ and $\nu$ satisfy the conditions%
\begin{equation}
E>0,  \textup{ \ \ }-1<\nu<1/2.
 \label{condC}
\end{equation}

We point out that if $E=1$ and $\nu=0$, tensor in (\ref{Isotropy}) is the identity tensor.
When $\CC$ has the expression in (\ref{Isotropy}), its inverse is
\begin{equation}
\CC^{-1}=\frac{1+\nu}{E} \mathbb{I}_{Sym}-\frac{\nu}{E} \mathbf{I}\otimes
\mathbf{I}. \label{Iso-1}
\end{equation}
Let us now limit ourselves to consider the isotropic fourth-order tensor $\CC$ in (\ref{Isotropy}). In this case, from the coaxiality of  tensors $\bY^*$ and $\CC(\mathbf{D}-\bY^*)$, it follows that $\mathbf{D}$ and $\bY^*$ are coaxial as well. This property makes it easy to calculate for each choice of $\mathbf{D}$ the minimum point of $\phi$ explicitly, and then compare the explicit solution to the numerical one. For the sake of comparison, the explicit solution is summarized in the following.

For $\mathbf{D}\in Sym,$ let $d_{1}\leq d_{2}\leq d_{3}$ be its
ordered eigenvalues and $\mathbf{q}_{1},$
$\mathbf{q}_{2},\mathbf{q}_{3}$ the corresponding eigenvectors. We
introduce the following tensors of $Sym$
\begin{equation}
\mathbf{O}_{11}=\mathbf{q}_{1}\otimes \mathbf{q}_{1},\textup{ }\mathbf{O}_{22}=%
\mathbf{q}_{2}\otimes \mathbf{q}_{2},\textup{ }\mathbf{O}_{33}=\mathbf{q}%
_{3}\otimes \mathbf{q}_{3}.\label{O123}
\end{equation}%

Given $\mathbf{D}$, the corresponding minimum point  $\bY^*$ of the functional $\phi$ in (\ref{functional}) is
\begin{equation}
\bY^*=\mathbf{0},\,\,\,\textup{if   }\,\,\, d_{1}+\frac{\nu}{1-2\nu}(d_1+d_2+d_3)\geq 0,
\label{T0}
\end{equation}%
\begin{equation*}
\bY^*=[d_{1}+\frac{\nu}{1-\nu}(d_2+d_3)] \mathbf{O}_{11},\,\,\, \textup{if  }\,\,\, (1-\nu)d_{1}+\nu (d_2+d_3)\leq 0,
\end{equation*}
\begin{equation}
d_2+\nu d_3\geq 0, \label{T1}
\end{equation}%
\begin{equation}
\bY^*=
(d_1+\nu d_3)\mathbf{O}_{11} +(d_2+\nu d_3)\mathbf{O}_{22},\,\,\,\textup{if  }\,\,\, d_2+\nu d_3\leq 0,
 \textup{   }d_3\geq 0,
\label{T2}
\end{equation}%
\begin{equation}
\bY^*=\mathbf{D},\,\,\,\textup{if }\,\,\, d_{3} \leq 0. \label{T3}
\end{equation}%

When $\nu=0$, tensor $\CC$ is equal to $E \mathbb{I}_{Sym}$ and the projection of $\mathbf{D}$ onto $Sym^{-}$ with respect to the inner product associated with $\CC$, is

\begin{equation*}
\bY^*= \frac{\mathbf{D}-\sqrt{\mathbf{D}^2}}{2},
\label{Tiden}
\end{equation*}
where the square root $\sqrt{\mathbf{A}}$ of the positive semidefinite symmetric tensor $\mathbf{A}$ is the unique positive semidefinite symmetric tensor $\mathbf{B}$, such that $\mathbf{B}^2=\mathbf{A}$.

\subsection{The transversely isotropic case} \label{TIcase}
If $\Gamma$ is a proper subset of $Orth$, then $\CC$ satisfying (\ref{Invariant}) is said anisotropic. In this paper, we limit our attention to
only one kind of anisotropic tensors, corresponding to the transverse isotropic materials, described in the  following.

A fourth-order tensor $\CC$ is said transversely isotropic if there exists a unit vector $\mathbf{f}_3$ (the preferred  direction of transverse isotropy) such that $\CC$ is invariant under the subgroup $\Gamma_{TI} \subset Orth$  constituted by all the rotations about $\mathbf{f}_3$,%%
\begin{equation*}
\CC(\mathbf{Q} \mathbf{A} \mathbf{Q}^T)=\mathbf{Q} \CC(\mathbf{A}) \mathbf{Q}^T, \textup{ \ \ }  \mathbf{A} \in Sym,  \mathbf{Q} \in \Gamma_{TI}.
\label{Ccubinv}
\end{equation*}
Let  $\mathsf{F}= (\mathbf{f}_{1},\mathbf{f}_{2},\mathbf{f}_{3})$ be an  orthonormal basis of  $\mathcal{V}$. If tensor $\CC$ is transversely isotropic with respect to the direction $\mathbf{f}_{3}$, then $\CC$ has the following representation \cite{PGV}
\begin{equation}
\CC=\sum_{i=1}^5  {\alpha_i \CC_i},
\label{CTI}
\end{equation}
where
\begin{equation}
\CC_1=\mathbf{R} \otimes \mathbf{R}, \textup{ \ \ } \CC_2=\mathbf{Q} \otimes \mathbf{Q},\textup{ \ \ } \CC_3=\mathbf{R} \otimes \mathbf{Q}+\mathbf{Q} \otimes \mathbf{R},
\label{CTI1}
\end{equation}
\begin{equation}
\CC_4=4\mathbf{R} \odot \mathbf{Q}, \textup{ \ \ } \CC_5=2\mathbf{Q} \odot \mathbf{Q}-\CC_2,
\label{CTI2}
\end{equation}
with $\mathbf{R}=\mathbf{f}_3 \otimes \mathbf{f}_3$ and $\mathbf{Q}=\mathbf{I}-\mathbf{R}$. The real numbers $\alpha_i$  satisfy the conditions
\begin{equation*}
\alpha_4>0,  \textup{ \ \ }\alpha_5>0, \textup{ \ \ } \alpha_1+2 \alpha_2 >0,   \textup{ \ \ }  \alpha_1 \alpha_2-\alpha_3^2>0,
\label{condCTI}
\end{equation*}
which guarantee the positive definiteness of  $\CC$ in (\ref{CTI}). Tensor $\CC$ in (\ref{CTI}) describes the mechanical behaviour of a transversely isotropic elastic material \cite{Gurtin2,PGV}.
For $\alpha_3=2 \alpha_2-\alpha_1$ and $\alpha_4=\alpha_5=\alpha_1-\alpha_2$ the fourth-order tensor in (\ref{CTI}) becomes isotropic,
\begin{equation*}
\CC=2 (\alpha_1-\alpha_2)\mathbb{I}_{Sym}+(2 \alpha_2-\alpha_1) \mathbf{I}\otimes
\mathbf{I}, \label{Isobis}
\end{equation*}
with
\begin{equation*}
\alpha_1=\frac{(1-\nu)E}{(1+\nu)(1-2\nu)},  \textup{ \ \ } \alpha_2=\frac{E}{2(1+\nu)(1-2\nu)}.
\label{alpha12}
\end{equation*}

In the anisotropic case $\mathbf{D}$ and $\bY^*$ are not coaxial and the the minimum point of functional $\phi$ can be calculated explicitly only for a few choices of $\mathbf{D}$. In particular, when $\mathbf{D}$ has the eigenvector $\mathbf{f}_3$, then $\mathbf{D}$, $\bY^*$ and $\CC(\mathbf{D}-\bY^*)$ are coaxial and the solution if provided in the following.

Given $\mathbf{D}\in Sym,$ let $d_{1}\leq d_{2}\leq d_{3}$ be its
ordered eigenvalues and $\mathbf{q}_{1},$
$\mathbf{q}_{2},\mathbf{f}_{3}$ the corresponding eigenvectors. For $\mathbf{O}_{11}$ and $\mathbf{O}_{22}$ defined in (\ref{O123}), the solution $\bY^*$ has the following expressions.
\begin{equation*}
\bY^*=\mathbf{0},\,\,\,\textup{if   }\,\,\, (\alpha_2+\alpha_5)d_1+(\alpha_2-\alpha_5)d_2+\alpha_3 d_3\geq 0,
\end{equation*}
\begin{equation}
\alpha_3(d_1+d_2)+\alpha_1 d_3\geq 0, \label{TI0}
\end{equation}

\begin{equation*}
\bY^*=[d_{1}+\frac{(\alpha_2-\alpha_5)d_2+\alpha_3d_3}{\alpha_2+\alpha_5}] \mathbf{O}_{11},\,\,\, \textup{if  }\,\,\, (\alpha_2+\alpha_5)d_1+(\alpha_2-\alpha_5)d_2+\alpha_3 d_3\leq 0,
\end{equation*}
\begin{equation}
2\alpha_2d_2+\alpha_3d_e\geq 0, \,\,\, 2\alpha_3\alpha_5d_2+[\alpha_1(\alpha_2+\alpha_5)-\alpha_3^2]d_3 \geq 0, \label{TI1}
\end{equation}%

\begin{equation}
\bY^*=
[d_1+\frac{\alpha_3}{2\alpha_2}d_3]\mathbf{O}_{11} +[d_2+\frac{\alpha_3}{2\alpha_2}d_3]\mathbf{O}_{22},\,\,\,\textup{if  }\,\,\, 2\alpha_2d_2+\alpha_3 d_3\leq 0,\,\,\, d_3 \geq 0,
\label{TI2}
\end{equation}%

\begin{equation}
\bY^*=\mathbf{D},\,\,\,\textup{if }\,\,\, d_{3} \leq 0. \label{TI3}
\end{equation}%
In general, suitable numerical strategies  should be adopted to calculate the minimum point of $\phi$, as proposed in the next section.

\section{A primal-dual path following interior-point method}\label{numeric}
The numerical computation of the minimum point of functional (\ref{functional}) can be
efficiently performed by exploiting the characterization of the solutions described in (\ref{condition1})
and (\ref{condition2}). Indeed, setting $\bX=-\bY$ these conditions describe 
the following monotone semidefinite linear complementarity problem (SDLCP) in the space of symmetric tensors
\begin{equation}\label{SDLPC}
(\bX,\bS) \in \AA,\ \bX \in Sym^+,\ \bS \in Sym^+ \mbox{ and } \bX \bullet \bS = 0,
\end{equation}
where the affine subspace $\AA \subseteq Sym \times Sym $ is given by
$$
\AA = \{ (\bX,\bS) \in Sym \times Sym\ | \  \bS = \cal{C}(\bD+\bX) \}.$$
The subspace $\AA$ has dimension 6 and is monotone as 
$$ (\bX - \bX') \bullet (\bS - \bS') = \|\bX - \bX' \|^2_{\cal{C}} \ge 0,$$
for all $ (\bX',\bS') $ and $ (\bX,\bS) \in \AA$.
\begin{comment}
We call $(\bX,\bS) \in \AA$ with 
 $\bX \in Sym^+$ and $\bS \in Sym^+$ a {\em feasible solution}
 of problem  \req{SDLPC} and  $(\bX,\bS) \in \mathcal {A}$ with 
 $\bX \in Sym^{++}$ and $\bS \in Sym^{++}$ a {\em strictly feasible solution}.
 \Commentcp{forse queste definizioni si possono togliere, non vengono mai richiamate}
\end{comment}

We now describe an interior point algorithm for the efficient and accurate solution of \req{SDLPC}  that exploits the form of the tensors $\CC$ presented in the previous section.
In  \cite{SDLCPpc,SDLCPdir,SDLCP1}, the general theoretical framework of the algorithm is given 
for SDLCPs with affine subspaces $\AA$ of a general form but our practical implementation is based on that of interior-point approaches in \cite{QSDP2,QSDP1} for a somehow related problem, that is the solution of convex Quadratic SemiDefinte Programming (QSDP) problems.

Problem \req{SDLPC}, and equivalently \req{condition1} and \req{condition2} with $\bY=-\bX$, 
are in fact the first-order optimality conditions of the following QSDP problem
\begin{equation}\label{sdp_primal}
  \begin{array}{ll}
     \min_{\bX} & p(\bX) = \frac{1}{2} \bX \bullet \CC(\bX) + \bX \bullet \CC(\bD) \\
     \mbox{s.t. } &  \bX \in Sym^+. \\
    \end{array}
\end{equation}
%
%where $\bX \in Sym$ is unknown. 
This problem differs with respect to the standard formulation
of QSDPs as only positive semidefiniteness constraints are present
while the usual linear constraints are not included, see e.g. \cite{QSDP1}.
Clearly, the functionals $p(\bX)$ and  $\phi(\bY)$ defined in \req{functional} and \req{sdp_primal} respectively, 
have the same minimizers (up to the sign).

Several methods have been proposed in the literature for  standard  QSDPs
ranging from interior point methods \cite{QSDP2,QSDP1} to semismooth Newton approaches \cite{QSDPNAL}, 
passing through reformulations as a standard semidefinite-quadratic-linear program  \cite{Anjos}.
%) by introducing an additional  linear constraints, 4 artificial variables and a second order cone constraint \cite{}.
Most of these works focus on the design and analysis of  efficient algorithms  for
the case where the matrix dimensions and/or the number of linear constraints are large,
and it may be impossible to explicitly store or compute the matrix representation of $\CC$.
Conversely, here we are interested in the accurate solution of problem 
(\ref{sdp_primal})  in the small case setting and propose to use a primal-dual path-following interior point method in the spirit of \cite{QSDP2,QSDP1}.

\begin{comment}
a general theoretical framework of interior-point methods for the monotone SDLCP is given.
In particular,\cite[Theorem 3.1.]{SLCP} ensures that  the central path leading to a solution of the problem exists
assuming that the interior of the feasible region is nonempty.

We now describe an interior-point methods for QSDPs adapted to problem (\ref{sdp_primal}) by exploiting the special structure of the tensor $\CC$ discussed in the previous section.
\end{comment}

We now describe the main steps of an interior point method based on the
primal-dual path-following method given in \cite{QSDP2,QSDP1}
for the solution of (\ref{sdp_primal}) coupled with its dual form:
\begin{equation}\label{sdp_dual}
  \begin{array}{ll}
  \max_{\bX,\bS} & d(\bX) = -\frac{1}{2} \bX \bullet \CC(\bX) \\
     \mbox{s.t. } &  \bS =  \CC(\bD+\bX) \\
     & \bS \in Sym^+.
  \end{array}
\end{equation}
%
%The unknowns $\bX$ and $\bS$ are called the ``primal'' and ``dual'' variables,
%respectively. 
The algorithm, named \namedot, uses Mehrotra's predictor-corrector steps
and practical details are postponed to Section \ref{algo}.
\name is based on approximating a sequence of points
on the central path. The central path  is defined as the set of solutions
$(\bX_{\mu}, \bS_{\mu})$
to the central path equations
\begin{equation}\label{F_mu}
F_{\mu}(\bX,\bS)=  \left (\begin{array}{c}
 \bS -\CC(\bX +\bD)\\
 \bX \bS- \sigma \mu \bI
\end{array}
\right )=\mathbf{0}, \qquad \bX \in Sym^{++},\  \bS \in Sym^{++},
\end{equation}
where $\sigma \in [0,1]$ is the   centering parameter and $\mu$ is the duality measure defined by 
$$
\mu = \frac{\bX  \bullet \bS}{3}.
$$
%which measures the average value of the pairwise products xisi.

Equations (\ref{F_mu}) can be also interpreted as the perturbed first-order optimality conditions for problems (\ref{sdp_primal})-(\ref{sdp_dual}).
Fixed $\sigma$ and assuming that there exists  $(\bX,\bS) \in \AA$ with 
$\bX \in Sym^{++}$ and $\bS \in Sym^{++}$, then 
\cite[Theorem 3.1]{SDLCP1} ensures that for every $\mu  > 0$, 
there exists a unique $(\bX_\mu,\bS_\mu)$ that lies on the central path, that is that solves \req{F_mu}.

Note that the first block equation in $F_{\mu}$ above is linear, while the second is
mildly nonlinear. Hence a Newton step seems a natural idea for an
iterative algorithm. Unfortunately, the residual map $F_\mu$
takes an iterate $(\bX, \bS) \in  Sym \times Sym $
to a point in $ Sym\times Lin $ (since $\bX \bS - \mu \mathbf{I}$ is in general not symmetric), which
is a space of higher dimension, and so Newton's method cannot be applied
directly.
To apply Newton-type algorithms it is previously necessary to symmetrize
the term $\bX  \bS$ so that the resulting equivalent nonlinear system gives a function that maps
$ Sym \times Sym $.  A popular and effective technique to overcome this issue, is introducing general symmetrization scheme based on the fourth-order tensor $\HH_{\mathbf{P}} :  Lin \rightarrow  Sym$
defined as
$$
\HH_{\mathbf{P}}= {\mathbf{P}} \odot {\mathbf{P}}^{-T}
$$
where ${\mathbf{P}}$ is some nonsingular tensor, see \cite{ToddDir} and references therein.

It has been shown that for any nonsingular tensor ${\mathbf{P}}$, the system 
$ F_{\mu}(\bX ,\bS)= 0 $ in (\ref{F_mu}) is equivalent to the system
\begin{equation}\label{F_mu_sym}
\tilde F_{\mu}(\bX,\bS)=  \left (\begin{array}{c}
 \bS - \CC(\bX+\bD)\\
\HH_{\mathbf{P}}(\bX \bS)- \sigma \mu \bI
\end{array}
\right )=\mathbf{0},
\end{equation}
to which a Newton-type method can be applied.
% Several interior-point methods proposed in the literature
% are Newton steps for nonlinear system of the form
% \begin{equation}\label{F_mu_sym}
% \tilde F_{\sigma\mu}(X,y,S)=  \left (\begin{array}{c}
% -\CC(\bX ) + S - \CC(\bD)\\
% \Theta(\bX ,S)- \sigma \mu \bI
% \end{array}
% \right )=0,
% \end{equation}
% where the last equation is some symmetrization of  $\bX S$.
% Let  $\bRd$ and $\bRc$ be the dual residual and the complementarity gap at the current iterate $(\bX _k,  S_k)$
% \begin{eqnarray}
%  \bRd & = & -\CC[\bX _k]  + S_k -  \CC(\bD)\\
%  \bRc & = & \Theta(\bX _k, S_k) - \sigma \mu \bI
% \end{eqnarray}
% and
Fixed $\mu$ and given the current iteration $(\bX ,   \bS)$, let  $(\Delta \bX , \Delta \bS)\in Sym \times  Sym $ denote a Newton direction.
Then it satisfies
\begin{eqnarray}
 -\CC(\Delta \bX)+ \Delta \bS & = & \bRd    \label{New1}\\
  \EE (\Delta \bX ) + \FF (\Delta \bS) & = & \bRc  \label{New2}
\end{eqnarray}
where the fourth-order tensors $\EE=\EE(\bX ,\bS)$ and $\FF=\FF(\bX ,\bS)$ are the derivative
of $\HH_{\mathbf{P}}$ with respect to $\bX $ and $\bS$ respectively, evaluated at the current iterate, i.e.
$$
\EE = {\mathbf{P}} \odot ({\mathbf{P}}^{-T}\bS) \qquad \FF = ({\mathbf{P}}\bX ) \odot {\mathbf{P}}^{-T},
$$
and $\bRd$ and $\bRc$ are the current dual residual and complementarity gap given by
\begin{eqnarray}
 \bRd & = & \CC(\bX +\bD)  - \bS , \label{Rd}\\
 \bRc & = &  \sigma \mu \bI - \HH_{\mathbf{P}}(\bX  \bS). \label{Rc}
\end{eqnarray}
Depending on the choice of $\mathbf{P}$, the tensors  $\EE$ and $\FF$ have a different form and therefore different forms 
for the equations (\ref{New1})-(\ref{New2}) can be  derived.
%In any case, the solution of (at least) one linear system with an $\bar n \times \bar n$ matrix is required.

Several choices for $\mathbf{P}$ are available in the literature \cite{ToddDir}. One of the most popular choice  yields the so-called  Nesterov-Todd  (NT) direction \cite{NT} and is obtained by 
choosing ${\mathbf{P}} = \bW^{-1/2}$ with $\bW$ being the geometric mean of $\bX $ and $\bS^{-1}$, i.e.
\begin{equation}\label{eq:W}
\bW= \bX ^{1/2}(\bX ^{1/2}\bS \bX ^{1/2})^{-1/2} \bX ^{1/2} =
                 \bS^{-1/2}(\bS^{1/2}\bX  \bS^{1/2})^{1/2} \bS^{-1/2},
                 \end{equation}
                 ($\bW\bS\bW=\bX $). The corresponding forms of  $\EE$ and $\FF$ and $\bRc$ are:
  $$\EE = \bS \odot \bW^{-1}, \quad \FF = \bW \odot \bW,\quad \bRc= \mu \bS^{-1} - \bX .
$$
Assuming that $\EE$ is nonsingular, system (\ref{New1})-(\ref{New2}) has a unique solution if and only if the  Schur complement 
$$ \SS =  \EE^{-1} \FF + \CC^{-1},$$
is positive definite \cite{NT}. This condition holds when $\bX, \bS$ and $\HH_{\mathbf{P}}(\bX \bS)$ are positive semidefinite.  In particular, $\HH_{\mathbf{P}}$ is positive definite whenever $\mathbf{P}$ is an invertible tensor that satisfies $\mathbf{P}^T \mathbf{P} = \bW^{-1}$ where $\bW$ is such that $\bW  \bS \bW = \bS$, as for the NT direction, see  \cite{NT}.

The  application of the method in \cite{QSDP2,QSDP1} to problems \req{sdp_primal}-\req{sdp_dual} yields the following procedure for the solution of the Newton system
\req{New1}-\req{New2}: 
solve the Schur complement system
\begin{equation}\label{sisX}
\SS (\Delta \bS ) =
\EE^{-1}(\bRc) - \CC^{-1} (\bRc),
\end{equation}
and compute $\Delta \bX   =  \CC^{-1}(\bRd - \Delta \bS)$.

For the NT direction, the  Schur complement tensor takes the form
\begin{equation}\label{Snt}
\SS_{NT} = (\bW \odot \bW) + \CC^{-1},\end{equation}
where $\bW$ is given in \req{eq:W} and   we recall that $\CC$ is nonsingular on $Sym$ and that the inverse $\CC^{-1}$ is explicitly available in the applications considered in this work.

%$$\SS_{AHO} = (\bI \odot \bS)^{-1} (\bX  \odot \bI) + \CC^{-1}. $$

In the next section we will show that the solution of \req{sisX} with
$\SS_{NT}$ may yield an inaccurate solution of the original problem
\req{SDLPC} due to the fast increasing of the condition number of
$\SS_{NT}$ as $\mu$ goes to zero. 

In order to provide an accurate solution of problem \req{SDLPC}, we propose
to use an alternative choice of the tensor ${\mathbf{P}}$ that was firstly proposed in \cite{AHO} and 
yields the so-called Alizadeh-Haeberly-Overton (AHO) direction.  
The AHO direction corresponds to set ${\mathbf{P}} = \bI$
that gives 
$$\EE = \bI \odot \bS, \quad \FF = \bX  \odot \bI,\quad \bRc= \mu \bI -\frac{1}{2}(\bX \bS+\bS\bX ).
 $$
We observe that with this choice of ${\mathbf{P}}$, the Newton system \req{New1}-\req{New2} admits a unique solution when 
 and $\bX,\bS$ are positive semidefinite  and $\bX \bS + \bS \bX $ is positive definite \cite[Corollary 3.2]{SDLCP1}.
%%%%%%%%%%%%%%%%%%

As alternative to the linear system with the Schur complement in \req{sisX}, 
in this work we propose to solve a different linear system that for the AHO direction possesses 
optimal conditioning properties when the tensor $\CC$ is of the form \req{Isotropy} or \req{CTI}-\req{CTI2} discussed in Section \ref{Special}.
Indeed, a solution of
\req{New1}-\req{New2} can be obtained also by
computing $\Delta \bX$ from
\begin{equation}\label{sisX1}
 \MM(\Delta \bX) =  \bRc - \FF (\bRd)
\end{equation}
where
$$
 \MM = \EE + \FF \CC,
$$
and then retrieving $\Delta \bS $ form \req{New1}. In fact, for the AHO direction, the above tensor 
$\MM$ has the following special form
\begin{equation}\label{Maho}
\MM_{AHO} =  (\bI \odot \bS) + (\bX  \odot \bI) \CC.
\end{equation}
We observe that while the Schur complement $\SS$ is symmetric the tensor $\MM$ is in general nonsymmetric.
The conditioning properties of $\SS_{NT}$ and $\MM_{AHO}$ are discussed in the next section.

\subsection{Conditioning issues} \label{condSM}

Let ${\mu^{(k)}}$ be a monotonically decreasing sequence such that $\lim_{k\rightarrow \infty} \mu^{(k)} =0$ 
and let $(\bX^{(k)}, \bS^{(k)})$ be a point on the central path corresponding to $\mu^{(k)}$, that is
$(\bX^{(k)}, \bS^{(k)})$ satisfies \req{F_mu}. Moreover, let $\SS_{NT}^{(k)}$ and $\MM_{AHO}^{(k)}$ be the corresponding tensors of the Newton systems
given in \req{Snt} and \req{Maho}, respectively. 
%Let $\lambda_{\min}(\CC^{-1})$ and $\lambda_{\max}(\CC^{-1})$ denote the minimum and maximum eigenvalue of $\CC^{-1}$, respectively.

Assume that the sequence $(\bX^{(k)}, \bS^{(k)})$ converges to the optimal solution $(\bX^*, \bS^*)$ as $\mu^{(k)}$  tends to zero  and that the ranks of $\bX^*$ and $\bS^*$ sum up to $3$.

We will now show that under these conditions, the condition number of $\SS^{(k)}_{NT}$ may not be bounded for $\mu^{(k)} \rightarrow 0$ while the condition number of $\MM^{(k)}_{AHO}$ does not depend on $\mu^{(k)}$ when $\CC$ is the isotropic tensor in \req{Isotropy}. Moreover we conjecture that the condition number of
$\MM^{(k)}_{AHO}$ is still bounded in the transversely isotropic case in \req{CTI}.

Let $\lambda_1^{(k)}, \lambda_2^{(k)}, \lambda_3^{(k)}$  and $\xi_1^{(k)}, \xi_2^{(k)}, \xi_3^{(k)}$
be the eigenvalues of $\bX^{(k)}$ and $\bS^{(k)}$, respectively. We observe that $\bX^{(k)}$ and $\bS^{(k)}$ commute and we denote by $(\mathbf{q}_1^{(k)}, \mathbf{q}_2^{(k)},\\ \mathbf{q}_3^{(k)})$ a  basis of common eigenvectors, moreover it holds  
\begin{equation}\label{gapk}
    \lambda_i^{(k)}\xi_i^{(k)} =  \sigma\mu^{(k)}, \mbox{  for } i = 1,2,3.
\end{equation}
Let $\lambda_1^{*}, \lambda_2^{*}, \lambda_3^{*}$  and $\xi_1^{*}, \xi_2^{*}, \xi_3^{*}$
be the eigenvalues of $\bX^*$ and $\bS^*$, respectively. They satisfy $\lambda_i^*\xi_i^* =  0$ and since $rank(\bX^*)+rank(\bS^*) = 3$, only the following 4 cases can occur:
\begin{itemize}
    \item case 1: $\xi_1^*, \xi_2^*,\xi_3^* >0$, $\lambda_1^*=\lambda_2^*=\lambda_3^*=0$;
    \item case 2: $\xi_1^*, \xi_2^* >0$ and $\xi^*_3=0$, $\lambda_1^*=\lambda_2^*=0$ and $\lambda_3^*>0$;
    \item case 3: $\xi_1^* >0$ and $\xi_2^*=\xi^*_3=0$, $\lambda_1^*=0$ and $\lambda_2^*, \lambda_3^*>0$;
    \item case 4: $\xi_1^*=\xi_2^*=\xi_3^*=0$, $\lambda_1^*,\lambda_2^*,\lambda_3^*>0$.
\end{itemize}

\begin{comment}
For $k$ sufficiently large, $\bW^{(k)}= \bQ^{(k)} {\mathbf{D}}^{(k)}\bQ^{(k)}^T$ with ${\mathbf{D}}^{(k)} = {\mathbf{\Lambda}}^{(k)}^{1/2}{\mathbf{\Xi}}^{(k)}^{-1/2}$
has $r$ and $s$ eigenvalues of the orders $O(1/\sqrt{\mu^{(k)}})$ and
$O (\sqrt{\mu^{(k)}})$, respectively.

\end{comment}

We now compute the eigenvalues of $\bW^{(k)} \odot \bW^{(k)}$.
Since $\bS^{(k)}$ and $\bX^{(k)}$ lie on the central path, by the definition of
$\bW^{(k)}$ in \req{eq:W} we have that the eigenvalues of
$\bW^{(k)}$ are $\sqrt{\lambda_i^{(k)}}/\sqrt{\xi_i^{(k)}}$ for $i = 1,2,3$.

Therefore, the eigenvalues of $\bW^{(k)} \odot \bW^{(k)}$  are $\frac{\sqrt{\lambda_i^{(k)} \lambda_j^{(k)}}}{\sqrt{\xi_i^{(k)} \xi_j^{(k)}}}$ for $i,j=1,2,3$, $i \le j$; thus, taking into account \req{gapk} the following cases can occur:
\begin{itemize}
  \item  case 1: $\bS^{(k)}$ has eigenvalues $\xi_1^{(k)}$, $\xi_2^{(k)}$, $\xi_3^{(k)}$ and $\bX^{(k)}$ has eigenvalues $\frac{\sigma\mu^{(k)}}{\xi_1^{(k)}}$, $\frac{\sigma\mu^{(k)}}{\xi_2^{(k)}}$, $\frac{\sigma\mu^{(k)}}{\xi_3^{(k)}}$, then the eigenvalues of $\bW^{(k)} \odot \bW^{(k)}$ are
\begin{equation*}
      \frac{\sigma\mu^{(k)}}{\xi_i^{(k)} \xi_j^{(k)}},\,\,\, i, j=1,2,3, \,\,\, i \le j.
\end{equation*}
  \item case 2:  $\bS^{(k)}$ has eigenvalues $\xi_1^{(k)}$, $\xi_2^{(k)}$, $\frac{\sigma\mu^{(k)}}{\lambda_3^{(k)}}$ and $\bX^{(k)}$ has eigenvalues $\frac{\sigma\mu^{(k)}}{\xi_1^{(k)}}$, $\frac{\sigma\mu^{(k)}}{\xi_2^{(k)}}$, $\lambda_3^{(k)}$, then the eigenvalues of $\bW^{(k)} \odot \bW^{(k)}$ are
\begin{equation*}
     \frac{\sigma\mu^{(k)}}{(\xi_1^{(k)})^2},\,\,\, \frac{\sigma\mu^{(k)}}{(\xi_2^{(k)})^2}, \,\,\, \frac{\sigma\mu^{(k)}}{\xi_1^{(k)}\xi_2^{(k)}}, \frac{(\lambda_3^{(k)})^2}{\sigma\mu^{(k)}},\,\,\,\frac{\lambda_3^{(k)}}{\xi_1^{(k)}}, \,\,\,\frac{\lambda_3^{(k)}}{\xi_2^{(k)}}.
\end{equation*}
  \item case 3:  $\bS^{(k)}$ has eigenvalues $\xi_1^{(k)}$, $\frac{\sigma\mu^{(k)}}{\lambda_2^{(k)}}$, $\frac{\sigma\mu^{(k)}}{\lambda_3^{(k)}}$ and $\bX^{(k)}$ has eigenvalues $\frac{\sigma\mu^{(k)}}{\xi_1^{(k)}}$, $\lambda_2^{(k)}$, $\lambda_3^{(k)}$, then the eigenvalues of $\bW^{(k)} \odot \bW^{(k)}$ are
\begin{equation*}
    \frac{\sigma\mu^{(k)}}{(\xi_1^{(k)})^2},\,\,\, \frac{(\lambda_2^{(k)})^2}{\sigma\mu^{(k)}},\,\,\,\frac{(\lambda_3^{(k)})^2}{\sigma\mu^{(k)}}, \,\,\, \frac{\lambda_2^{(k)}}{\xi_1^{(k)}},\,\,\,\frac{\lambda_3^{(k)}}{\xi_1^{(k)}},\,\,\, \frac{\lambda_1^{(k)} \lambda_2^{(k)}}{\sigma\mu^{(k)}}
\end{equation*}
  \item case 4:  $\bS^{(k)}$ has eigenvalues $\frac{\sigma\mu^{(k)}}{\lambda_1^{(k)}}$, $\frac{\sigma\mu^{(k)}}{\lambda_2^{(k)}}$, $\frac{\sigma\mu^{(k)}}{\lambda_3^{(k)}}$ and $\bX^{(k)}$ has eigenvalues $\lambda_1^{(k)}$, $\lambda_2^{(k)}$, $\lambda_3^{(k)}$, then the eigenvalues of $\bW^{(k)} \odot \bW^{(k)}$ are

\begin{equation*}
     \frac{(\lambda_1^{(k)})^2}{\sigma\mu^{(k)}},\,\,\,\frac{(\lambda_2^{(k)})^2}{\sigma\mu^{(k)}},\,\,\,\frac{(\lambda_3^{(k)})^2}{\sigma\mu^{(k)}},\,\,\, \frac{\lambda_1^{(k)} \lambda_2^{(k)}}{\sigma\mu^{(k)}},
     \,\,\,\ \frac{\lambda_1^{(k)} \lambda_3^{(k)}}{\sigma\mu^{(k)}}, \,\,\ \frac{\lambda_2^{(k)} \lambda_3^{(k)}}{\sigma\mu^{(k)}}
\end{equation*}
\end{itemize}

%!!!!!!!!!!!!!!!!!!!!!!!!!!!!!!!!!!!!!!!!!!!!!!!!!!!!!!!!!!!!!!!!!!!!!!!!!!!!
%This implies that $\bW_k \odot \bW_k$ has $\bar r$, $rs$, and $\bar s$ eigenvalues of the orders  $O(1/\mu_k)$, $O(1)$ and
%$O(\mu_k)$, respectively(vedi appendice \cite{NT}.%
%!!!!!!!!!!!!!!!!!!!!!!!!!!!!!!!!!!!!!!!!!!!!!!!!!!!!!!!!!!!!!!!!!!!!!!!!!!!!
 From the Courant-Fisher-Weyl min-max principle \cite[Corollary 3.13]{{Weyl}} we get:
 $$
\lambda_{\max} (\bW^{(k)}\odot \bW^{(k)}) + \lambda_{\min}(\CC^{-1}) \le  \lambda_{\max}(\SS^{(k)}_{NT} ) \le \lambda_{\max} (\bW^{(k)}\odot \bW^{(k)}) + \lambda_{\max}(\CC^{-1})
 $$
 and 
  $$
 \lambda_{\min} (\bW^{(k)}\odot \bW^{(k)}) + \lambda_{\min}(\CC^{-1}) \le \lambda_{\min}(\SS^{(k)}_{NT}) \le  \lambda_{\min} (\bW^{(k)}\odot\bW^{(k)}) + \lambda_{\max}(\CC^{-1}) 
 $$
 and the condition number $\kappa(\SS^{(k)}_{NT})$ of $\SS^{(k)}_{NT}$ satisfies the inequality:
  $$
\frac{\lambda_{\max} (\bW^{(k)}\odot \bW^{(k)}) + \lambda_{\min}(\CC^{-1})}{\lambda_{\min} (\bW^{(k)}\odot \bW^{(k)}) + \lambda_{\max}(\CC^{-1}) } \le  \kappa(\SS^{(k)}_{NT} ) 
\le \frac{\lambda_{\max} (\bW^{(k)}\odot \bW^{(k)}) + \lambda_{\max}(\CC^{-1})}{\lambda_{\min} (\bW^{(k)}\odot \bW^{(k)}) + \lambda_{\min}(\CC^{-1})}.
 $$
Therefore in cases 1 and 4 $\kappa(\SS^{(k)}_{NT})$ is bounded, while in cases 2 and 3 we have that $\kappa(\SS^{(k)}_{NT}) = O((\mu^{(k)})^{-1})$.

Let us now consider the nonsymmetric fourth-order tensor $\MM_{AHO}^{(k)}$ defined in \req{Maho} with $\bS^{(k)}$ and $\bX^{(k)}$ in the place of $\bS$ and $\bX$. In order to analyze its condition number, we consider the positive definite symmetric fourth-order tensor 
$$\LL^{(k)}=\MM_{AHO}^{(k)} (\MM_{AHO}^{(k)})^T,$$ 
which has the form
\begin{equation}\label{K}
\LL^{(k)} = (\bI \odot \bS^{(k)})^2 
+ (\bX^{(k)}  \odot \bI) \CC(\bI \odot \bS^{(k)}) + (\bI \odot \bS^{(k)}) \CC (\bX^{(k)}  \odot \bI) + (\bX^{(k)}  \odot \bI) \CC^2 (\bX^{(k)}  \odot \bI)
\end{equation}
and calculate its eigenvalues focusing on the case in which $\CC$ is isotropic with expression (\ref{Isotropy}). By introducing the Lam\a'{e} moduli
\begin{equation*}
\psi=\frac{E}{2(1+\nu)}, \,\,\, \omega=\frac{\nu E}{(1+\nu)(1-2\nu)},
\end{equation*}
from \req{K} we get for $\mathbf{H} \in Sym$
\begin{eqnarray}
\LL^{(k)}(\mathbf{H}) & =& \frac{1}{4}\left(\mathbf{H}(\bS^{(k)})^2+(\bS^{(k)})^2\mathbf{H}+2\bS^{(k)}\mathbf{H}\bS^{(k)}\right)+ \nonumber\\
& & \psi^2\left(\mathbf{H}(\bX^{(k)})^2+(\bX^{(k)})^2\mathbf{H}+2\bX^{(k)}\mathbf{H}\bX^{(k)}\right)+  \nonumber\\
& & \frac{\psi}{2}\left(2\bX^{(k)}\mathbf{H}\bS^{(k)}+2\bS^{(k)}\mathbf{H}\bX^{(k)}+\bX^{(k)}\bS^{(k)}\mathbf{H} + \right .  \nonumber\\
& & \left .\bS^{(k)}\bX^{(k)}\mathbf{H}+\mathbf{H}\bX^{(k)}\bS^{(k)}+\mathbf{H}\bS^{(k)}\bX^{(k)}\right)+  \nonumber\\
& & \omega\left(tr(\mathbf{H}\bS^{(k)})\bX^{(k)}+tr(\mathbf{H}\bX^{(k)})\bS^{(k)}+(3\omega+4\psi)tr(\mathbf{H}\bX^{(k)})\bX^{(k)}\right). \label{K_H}
\end{eqnarray}

It is easy to show that the following three positive real numbers
\begin{equation}
m_1^{(k)}=\frac{1}{4}(\xi_1^{(k)}+\xi_2^{(k)})^2+2\psi^2(\lambda_1^{(k)}+\lambda_2^{(k)})^2+(\xi_1^{(k)}+\xi_2^{(k)})(\lambda_1^{(k)}+\lambda_2^{(k)}), \label{m1}
\end{equation}
\begin{equation}
m_2^{(k)}=\frac{1}{4}(\xi_1^{(k)}+\xi_3^{(k)})^2+2\psi^2(\lambda_1^{(k)}+\lambda_3^{(k)})^2+(\xi_1^{(k)}+\xi_3^{(k)})(\lambda_1^{(k)}+\lambda_3^{(k)}),\label{m2}
\end{equation}
\begin{equation}
m_3^{(k)}=\frac{1}{4}(\xi_2^{(k)}+\xi_3^{(k)})^2+2\psi^2(\lambda_2^{(k)}+\lambda_3^{(k)})^2+(\xi_2^{(k)}+\xi_3^{(k)})(\lambda_2^{(k)}+\lambda_3^{(k)}),\label{m3}
\end{equation}
are eigenvalues of $\LL^{(k)}$ with eigentensors
\begin{equation}
\frac{1}{\sqrt{2}}(\mathbf{q}_1^{(k)}\otimes\mathbf{q}_2^{(k)}+\mathbf{q}_2^{(k)}\otimes\mathbf{q}_1^{(k)}), \,\,\, \frac{1}{\sqrt{2}}(\mathbf{q}_1^{(k)}\otimes\mathbf{q}_3^{(k)}+\mathbf{q}_3^{(k)}\otimes\mathbf{q}_1^{(k)}),\label{q1q2}
\end{equation}
\begin{equation}
\frac{1}{\sqrt{2}}(\mathbf{q}_2^{(k)}\otimes\mathbf{q}_3^{(k)}+\mathbf{q}_3^{(k)}\otimes\mathbf{q}_2^{(k)}).\label{q3}
\end{equation}

The remaining eigentensors  belong to the subspace of $Sym$ spanned by $\mathbf{q}_1^{(k)}\otimes\mathbf{q}_1^{(k)}$,
$\mathbf{q}_2^{(k)}\otimes\mathbf{q}_2^{(k)}$ and
$\mathbf{q}_3^{(k)}\otimes\mathbf{q}_3^{(k)}$.

Thus, we look for real numbers $m$ and triples $(\chi_1, \chi_2, \chi_3) \ne (0, 0, 0)$ such that, putting
\begin{equation*}
\mathbf{A}=\chi_1 \mathbf{q}_1^{(k)}\otimes\mathbf{q}_1^{(k)}+\chi_2 \mathbf{q}_2^{(k)}\otimes\mathbf{q}_2^{(k)}+\chi_3 \mathbf{q}_3^{(k)}\otimes\mathbf{q}_3^{(k)}
\end{equation*}
we have
\begin{equation}
\LL^{(k)}(\mathbf{A})=m \mathbf{A}. \label{SysAHO}
\end{equation}
From \req{K_H} we get
\begin{eqnarray*}
\LL^{(k)}(\mathbf{q}_i^{(k)}\otimes\mathbf{q}_i^{(k)})&=&(\xi_i^{(k)}+2\psi\lambda_i^{(k)})^2\mathbf{q}_i^{(k)}\otimes\mathbf{q}_i^{(k)} + \\
& & \omega \left [\left(\xi_i^{(k)}+(3\omega+4\psi)\lambda_i^{(k)}\right)\bX^{(k)}+\lambda_i\bS^{(k)}\right],
\end{eqnarray*}
and, taking into account the linear independence of tensors $\mathbf{q}_i^{(k)}\otimes\mathbf{q}_i^{(k)}$, $i=1,2,3$, we can conclude that  nonzero triples $(\chi_1, \chi_2, \chi_3)$ exist provided that $m$ is a root of  the following third-degree polynomial
\begin{equation}
\tilde{p}_k(m)=m^3+a_k m^2+b_k m+ c_k,\label{polk}
\end{equation}
\noindent which is the determinant of the shifted system derived from \req{SysAHO}.
The coefficients of $\tilde{p}_k(m)$ are
\begin{equation*}
a_k=-(\gamma_{11}^{(k)}+\gamma_{22}^{(k)}+\gamma_{33}^{(k)}),\label{ak}
\end{equation*}
\noindent 
\begin{equation*}
b_k=\gamma_{11}^{(k)}\gamma_{22}^{(k)}-(\gamma_{12}^{(k)})^2+\gamma_{11}^{(k)}\gamma_{33}^{(k)}-(\gamma_{13}^{(k)})^2+\gamma_{22}^{(k)}\gamma_{33}^{(k)}-(\gamma_{23}^{(k)})^2,\label{bk}
\end{equation*}
\noindent and
\begin{equation*}
c_k=-\gamma_{11}^{(k)}\gamma_{22}^{(k)}\gamma_{33}^{(k)}+\gamma_{11}^{(k)}(\gamma_{23}^{(k)})^2+\gamma_{22}^{(k)}(\gamma_{13}^{(k)})^2+\gamma_{33}^{(k)}(\gamma_{12}^{(k)})^2-2\gamma_{12}^{(k)}\gamma_{13}^{(k)}\gamma_{23}^{(k)},\label{ck}
\end{equation*}
\noindent with
\begin{equation*}
\gamma_{ii}^{(k)}=(\xi_i^{(k)})^2+(4\psi^2+3\omega^2+4\omega \psi)(\lambda_i^{(k)})^2+2(2\psi+\omega)\lambda_i^{(k)}\xi_i^{(k)}, \,\,\, i=1,2,3,\label{gammaii}
\end{equation*}
\noindent and
\begin{equation*}
\gamma_{ij}^{(k)}=\omega\left ((\lambda_i^{(k)}\xi_j^{(k)}+ \lambda_j^{(k)}\xi_i^{(k)})+(3\omega+4\psi)\lambda_i^{(k)}\lambda_j^{(k)} \right), \,\,\, i, j=1,2,3, \,\,\, i < j.\label{gammaij}
\end{equation*}

From the symmetry and positive definiteness of $\LL^{(k)}$ it follows that the polynomial (\ref{polk}) has three positive real roots $m_4^{(k)}$, $m_5^{(k)}$ and $m_6^{(k)}$, which are the sought eigenvalues of $\LL^{(k)}$. 

Taking into account the complementarity condition \req{gapk} 
and considering the four cases for the forms of
$\lambda_i^{(k)}$ and $\xi_i^{(k)}$ as done in the analysis of $\kappa(\SS_{NT}$, we get that in all cases $a_k$, $b_k$ and $c_k$ are polynomials of the variable $\mu_k$ of degree 2, 4 and 6, respectively.  Thus the roots of $\tilde{p}_k(m)$ have the expressions
\begin{equation}
 m_i^{(k)}=e_i^{(k)}+f_i^{(k)}\mu^{(k)}+g_i^{(k)}(\mu^{(k)})^2, \,\,\, i=4,5,6 \label{m456}
 \end{equation}
for some scalars $e_i^{(k)}$, $f_i^{(k)}$ and $g_i^{(k)}$. 

We point out that the eigenvalues of $\LL^{(k)}$ given in \req{m1}-\req{m3} are of the type \req{m456} with $e_i^{(k)}\neq0$ for $i=1,2,3$ in the four possible cases. Assuming $e_i^{(k)}\neq0$ for all $k$ and $i=4, 5, 6$, we can conclude that the condition number of $\MM_{AHO}^{(k)}$ does not depend on $\mu^{(k)}$. 
 We are aware that the assumption on $e_i^{(k)}$ in \req{m456}
is rather strong but we remark that it is fulfilled in all the performed experiments.

In addition, the independence of the condition number of $\MM_{AHO}^{(k)}$ on $\mu^{(k)}$ is corroborated by the analysis of the case $\CC=\mathbb{I}_{Sym}$. For this choice of $\CC$, tensor $\MM_{AHO}^{(k)}$ is symmetric and its eigenvalues are
\begin{equation*}
h_1^{(k)}=\frac{1}{2}(\xi_1^{(k)}+\xi_2^{(k)}+\lambda_1^{(k)}+\lambda_2^{(k)}), \quad
h_2^{(k)}=\frac{1}{2}(\xi_1^{(k)}+\xi_3^{(k)}+\lambda_1^{(k)}+\lambda_3^{(k)}), \label{e2}
\end{equation*}
\begin{equation*}
h_3^{(k)}=\frac{1}{2}(\xi_2^{(k)}+\xi_3^{(k)}+\lambda_2^{(k)}+\lambda_3^{(k)}), \label{e3}
\end{equation*}
with eigentensors in (\ref{q1q2}) and (\ref{q3}), and 
$$
h_4^{(k)}=\xi_1^{(k)}+\lambda_1^{(k)}, \quad
h_5^{(k)}=\xi_2^{(k)}+\lambda_2^{(k)}, \quad 
h_6^{(k)}=\xi_3^{(k)}+\lambda_3^{(k)}, 
$$
with eigentensors $\mathbf{q}_1^{(k)}\otimes\mathbf{q}_1^{(k)}$,
$\mathbf{q}_2^{(k)}\otimes\mathbf{q}_2^{(k)}$ and
$\mathbf{q}_3^{(k)}\otimes\mathbf{q}_3^{(k)}$, respectively.
Once again, taking into account the complementarity condition \req{gapk} 
and considering the four cases for the forms of
$\lambda_i^{(k)}$ and $\xi_i^{(k)}$, we get that $h_i^{(k)}$ have the expression in (\ref{m456}), with $g_i^{(k)}=0$ and $e_i^{(k)}\neq0$ and the desired result follows.

The calculation of the eigenvalues of $\LL^{(k)}$ for $\CC$ transversely isotropic is not easy and their explicit expressions are not available. Nevertheless, the numerical experiments reported in Section \ref{test} show that, as in the isotropic case, the condition number of $\MM_{AHO}^{(k)}$ does not depend on $\mu^{(k)}$.

\section{Numerical experiments} \label{test}
This section is devoted to numerical experiments; our purpose here is 
validating the proposed interior point approach for minimizing functional (\ref{functional}) and showing that it provides accurate solutions.
Moreover, we show that it is suitable for being implemented in the finite element code NOSA-ITACA \cite{nosaitaca} for the structural analysis of masonry constructions as it is able to solve problems where $\bD$ is the infinitesimal strain tensor calculated within NOSA-ITACA for each Gauss point (see the third data set in Section \ref{sec:dati}).

We first describe the details of the implemented methods and then discuss the 
testing sets and the numerical results. \\

We remark that in the following sections we focus on the problem formulation \req{sdp_primal}
in the description of both the algorithm and the experiments. Clearly, analogous considerations can be retrieved focusing on the minimization of (\ref{functional}) changing the variable $\bY=-\bX$.

%%%%%%%%%%%%%%%%%%%%%%%%%%%%%%%%%%%%%%%%%%%%%%%%%
\subsection{The \name algorithm and implementation details} \label{algo}

We report in Algorithm \ref{algo:pc} a pseudo-code for the \name method that is in fact an adaptation of the Mehrotra-type predictor corrector primal-dual algorithm \cite{QSDP2,QSDP1} applied to problem (\ref{sdp_primal}). This algorithm is very-well-known and is currently implemented in general purpose software for general QSDPs \cite{QSDP-guide}. It is a generalization of the method used in SDTP3 and cvx for linear semindefinite programming problems \cite{cvx-link,SDTP3-guide}.

In the description of the algorithm, let the current and the next iterate be $(\bX ,\bS)$ and
$(\bX ^+,\bS^+)$, respectively. Also, let the
current and the next step-length parameter be denoted by
$\tau$ and $\tau^+$, respectively.

\begin{algorithm}
\caption{\name}\label{algo:pc}
\begin{algorithmic}[1]
\Require Initial positive definite $(\bX ,\bS)$, accuracy level $\epsilon>0$.\\
Set {\tt flag = NT} if ${\mathbf{P}} = \bW^{-1}$ with $\bW$ in \req{eq:W} (NT direction); 
set {\tt flag = AHO} if ${\mathbf{P}}=\bI$ (AHO direction).
%Decide on the type of search direction to use (tensor $\mathbf{P}$). 
Set $\tau  = 0.9$.
 \Repeat %$\;k=1,2,\dots$}
 \State Set $\mu = \bX  \bullet \bS / 3$
 \State {\bf Convergence Test}
 \State Stop the iteration if the accuracy measure $\eta$ is sufficiently small, that is
 $$\eta\le \epsilon $$
  \State where
      $$
      \eta = \max \left \{ \frac{\bX  \bullet \bS}{1+ |p(\bX )| + |d(\bX )|}, 
      \frac{\|\bRd\|_F}{1+\|\CC(\bD)\|_F}  \right \},
      $$
\State $\bRd$ is defined in (\ref{Rd}) and the primal and dual objectives
$p(\bX )$ and $d(\bX )$ are
\State defined in (\ref{sdp_primal}) and (\ref{sdp_dual}), respectively. % and $pobj = \frac{1}{2} \bX  \bullet \CC(\bX ) + \bX  \bullet  \CC(E)$.

\State {\bf Predictor step}
\State Compute the predictor search direction $(\delta \bX , \delta \bS)$ by solving (\ref{New1})-(\ref{New2}) with $\sigma =0$ 
\State in the right hand-side $\bRc$ in (\ref{Rc}), in particular:
\If{ {\tt flag = NT}}
\State solve \req{sisX} and update $\delta \bX$  from \req{New1}
\Else{}% {\tt flag = AHO}}
\State solve \req{sisX1} and update $\delta \bS$ from \req{New1}
\EndIf
% qui serve dire che theta = Hp -sigma mu I
\State {\bf Predictor step-length}
\State Compute
 $$\alpha_p = \min (1, \tau \alpha)$$ \label{ap}
\State where $\alpha$ is the maximum step-length that can be taken so that $\bX +\alpha \delta \bX $ and
\State $\bS+\alpha \delta \bS$ remain positive semidefinite.
\State {\bf Centering rule}
\State Set $\sigma = (\bX +\alpha_p \delta \bX ) \bullet (\bS+\alpha_p \delta \bS)/\bX \bullet \bS$
\State {\bf Corrector step}
\State Compute the search direction
$(\Delta \bX , \Delta \bS)$ by solving (\ref{New1})-(\ref{New2}) with
$\bRc$ replaced by
$$
\bRc = \sigma \mu \bI - \HH_P(\bX \bS) -\HH_P(\delta \bX  \delta \bS),
$$
\State in particular:
\If{ {\tt flag = NT}}
\State solve \req{sisX} and update $\Delta \bX$  from \req{New1}
\Else{}% {\tt flag = AHO}}
\State solve \req{sisX1} and update $\Delta \bS$ from \req{New1}
\EndIf
\State {\bf Corrector step-length}
\State Compute
 $$\alpha_c = \min (1, \tau \alpha)$$  \label{ac}
\State where $\alpha$ is the maximum step-length that can be taken so that $\bX +\alpha \Delta \bX $ and
\State $\bS+\alpha \Delta \bS$ remain positive semidefinite.
\State {\bf Iterate and step-length update}
\State $\bX ^+ = \bX  + \alpha_c \Delta \bX ,\ \bS^+ = \bS + \alpha_c \Delta \bS$
\State $\tau^+ = 0.9 + 0.08 \alpha_c$.

\Until{convergence}
%\label{euclidendwhile}
\end{algorithmic}

\end{algorithm}
The step-length $\alpha$ is defined at Line \ref{ap} as
$\alpha = \min \{\alpha_\bX , \alpha_\bS \}$ with
$$ \alpha_\bX  = \left \{
\begin{array}{ll}
 \frac{-1}{\lambda_{\min}(\bX ^{-1}\delta \bX )} & \mbox{ if } \lambda_{\min}(\bX ^{-1}\delta \bX ) <0 \\
 \infty & \mbox{ otherwise},
\end{array}
\right .
$$
and
$$ \alpha_\bS = \left \{
\begin{array}{ll}
 \frac{-1}{\lambda_{\min}(\bS^{-1}\delta \bS)} & \mbox{ if } \lambda_{\min}(\bS^{-1}\delta \bS) <0 \\
 \infty & \mbox{ otherwise}.
\end{array}
\right .
$$
At Line  \ref{ac}, $\alpha$ is defined analogously replacing $\delta \bX $ and $\delta \bS$ with
$\Delta \bX $ and $\Delta \bS$, respectively.

We implemented \name in Matlab. In particular, regarding the NT direction, we closely followed the detailed implementation description in \cite{NT}.

The computation of the predictor and the corrector steps involves the computation and factorization either of the Schur complement $\SS_{NT}$ (if {\tt flag = NT}) or of the tensor $\MM_{AHO}$ (if {\tt flag = AHO}). We used the Cholesky factorization for $\SS_{NT}$  and the LU factorization with partial pivoting for $\MM_{AHO}$. Moreover, in computing $\alpha_{\bX}$ we computed the minimum eigenvalue of the symmetric tensor $\bL^{-1}\delta \bX \bL^{-T}$
where $\bX = \bL \bL^T$ is the Cholesky factorization of $\bX$; Analogously for 
$\alpha_{\bS}$ (in both Lines \ref{ap} and \ref{ac}).

In all the experiments, we used $(\bX, \bS ) = 2 (\bI, \bI)$
as a starting guess. Moreover, we set the accuracy level $\epsilon$ to the 
tight value $\epsilon = 10^{-15}$. Finally, a maximum number of 200 interior 
point iterations is allowed. The execution of the algorithm is also prematurely interrupted in case an error occurs in the  Cholesky factorization of $\bX$, $\bS$ or $\SS_{NT}$ meaning that these tensors are numerically loosing the positive definiteness.

Finally, we mention that \name has been implemented using the vector formalism  described in Section \ref{roba} as done in \cite{QSDP-guide,SDTP3-guide}.

%%%%%%%%%%%%%%%%%%%%%%%%%%%%%%%%%%%%%%%%%%%%%%%%%
\subsection{Data sets} \label{sec:dati}
The performance of the IPM-Proj algorithm is tested in on three data sets designed to highlight  the features of the projection problem and show the good behaviour of the algorithm for different choices of $\CC$. In what follows, we denote by $\CCiso$  the isotropic tensor defined in \req{Isotropy} and 
by $\CCtra$ the transversely isotropic tensor in \req{CTI}.

\paragraph{First data set: random $\bD$} 
We generated $10^5$ random symmetric tensors $\bD$ with random eigenvalues  in the interval $[-l,l]$ and  removed from the set tensors such that
$\bD \in Sym^-$ or such that $\CC(\bD) \in Sym^+$ as in these cases the solution is trivial.
Overall, we get sets of  74839, 70529 and 83373 tensors for $\CC = \mathbb{I}_{Sym}$, for $\CCiso$ and for $\CCtra$ with parameters given in Table \ref{tab:parC}, respectively. %stesso numero per l=1 e l=10
%Q = orth(rand(3));
% lambda = -lim + (lim+lim)*rand(3,1);

\paragraph{Second data set: Temple $\bD$} The goal of this data set is to test our algorithm with a view toward applications. To show the algorithm's efficiency without implementing it into the code NOSA-ITACA, we resort to an artificial case study constituted by the domed temple discretized into 31052 8-nodes hexahedral elements shown in Figure \ref{fig:massa}. First, the temple is subjected to its weight, and the strain field is calculated via a static analysis conducted with the NOSA-ITACA code.
\begin{figure}[h]
    \centering
    \includegraphics[height=.3\textheight]{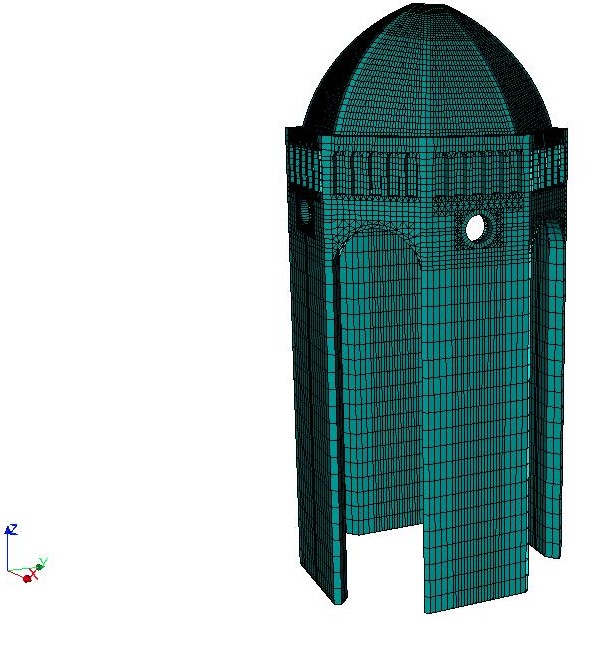}
    \caption{Finite element model of the domed temple.}
    \label{fig:massa}
\end{figure}
Then, tensors $\bD$ of the data set are the strain tensors calculated by NOSA-ITACA at each of 248216 Gauss points of the mesh. As for the random case, we removed from the set the tensors that gave trivial solutions.  
In particular, we tested tensors $\CCiso$ and for $\CCtra$ with 
values of $\nu$, $E$ and $\alpha$'s given in Table \ref{tab:parC} which are driven by physical considerations. Since in our tests the magnitude of these parameters in $\CC$ greatly differed from the the magnitude of the elements of $\bD$ generated by NOSA-ITACA, we found numerically convenient to normalize both $\bD$ and $\CC$, taking into account the properties of homogeneity \req{homog} and invariance \req{boh} of the projection introduced in Section \ref{sec2}. Taking into account the normalization, overall we get  89492 and 21747  tensors $\bD$ using $\CCiso$ and $\CCtra$, respectively.

%Then, the proposed algorithm has been run for each of the 248216 Gauss points of the mesh, using as tensor $\bD$ the strain calculated by NOSA-ITACA. Using parameters in Table \ref{tab:parC};

\paragraph{Third data set: parametric $\bD$} 
As pointed out in Section \ref{TIcase}, unlike the isotropic case, in the transversely isotropic case, the data $\bD$ and the exact solution $\bY^*$ (and then $\bX^*$) are in general not coaxial, and this lack of coaxiality seems to affect the performance of the \name algorithm.
Thus, the third data set is aimed at showing that the AHO direction behaves better than the NT direction when $\bD$ is not coaxial with $\bS$ and $\bX$. To this purpose, let  $(\mathbf{f}_{1},\mathbf{f}_{2},\mathbf{f}_{3})$ be an  orthonormal basis of  $\mathcal{V}$, with $\mathbf{f}_3$ the direction of transverse isotropy and let  us consider the orthonormal vectors
\begin{equation*}
\mathbf{q}_1(\theta_1, \theta_2)=-cos\theta_1 cos\theta_2 \mathbf{f}_1-sin\theta_1cos\theta_2 \mathbf{f}_2 + sin\theta_2 \mathbf{f}_3,
\end{equation*}
\begin{equation*}
\mathbf{q}_2(\theta_1, \theta_2)=sin\theta_1 \mathbf{f}_1-cos\theta_1 \mathbf{f}_2,
\end{equation*}
\begin{equation*}
\mathbf{q}_3(\theta_1, \theta_2)=cos\theta_1 sin\theta_2 \mathbf{f}_1 +sin\theta_1 sin\theta_2 \mathbf{f}_2 + cos\theta_2 \mathbf{f}_3,
\end{equation*}
and the tensors
\begin{equation*}
\mathbf{Q}_{ii}(\theta_1, \theta_2)=\mathbf{q}_{i}(\theta_1, \theta_2)\otimes \mathbf{q}_{i}(\theta_1, \theta_2), \,\,\, i=1,2,3,
\end{equation*}
\noindent for $\theta_1$ and  $\theta_2 \in [0,2\pi]$.
We then construct tensors $\mathbf{D}\in Sym$ of the type
\begin{equation}\label{Dnonc}
\mathbf{D}(\theta_1, \theta_2)=d_1\mathbf{Q}_{11}+d_2\mathbf{Q}_{22}+d_3\mathbf{Q}_{33}, \text{  with  }d_1, d_2, d_3 \in \mathbb{R}.
\end{equation}
For $\theta_2=0, \pi, 2\pi$  we have $\mathbf{q}_3(\theta_1, 0)=\mathbf{f}_3$, thus tensors $\bD$ and the solution $\bY^*$ are coaxial and the explicit solution is provided in (\ref{TI0})-(\ref{TI3}); for any other choice of $\theta_2$, $\bD$ and $\bY^*$ deviates from coaxiality.

\begin{table}[htbp]
  \centering   
  \caption{Parameters used in the experiments for the definition of the tensors $\CC$.}
    \begin{tabular}{l|rr|rrrrr}
    \toprule
& \multicolumn{2}{c}{$\CCiso$} & \multicolumn{5}{|c}{$\CCtra$} \\
\midrule
      &   \multicolumn{1}{c}{$E$} & \multicolumn{1}{c|}{$\nu$} & \multicolumn{1}{c}{$\alpha_1$} & \multicolumn{1}{c}{$\alpha_2$} & \multicolumn{1}{c}{$\alpha_3$}& \multicolumn{1}{c}{$\alpha_4$} & \multicolumn{1}{c}{$\alpha_5$}\ \\
      \midrule
 Random $\bD$     & 1 & 0.1 & 8 & 2 & 0.8 & 6 & 9 \\
 Temple $\bD$ & $3\cdot 10^9$ & 0.2 &  $2.0338\cdot 10^9$ & $ 1.91 \cdot 10^9$ & $ 2.5423 \cdot 10^8$ & $2\cdot 10^9$ & $ 1.25\cdot 10^9 $ \\
Parametric $\bD$     & 1 & 0.1 & 8 & 2 & 0.8 & 6 & 9 \\
    \bottomrule
    \end{tabular}%

  \label{tab:parC}%
\end{table}%

\subsection{Numerical results} \label{sec:risu}

All results given in this section were obtained on an
a Intel Core i7-9700K PC running at 3.60 GHz x 8 with 16 GB of RAM, 64-bit
and using Matlab R2019b.

We first discuss the performance of the proposed algorithm and the effectiveness of the symmetrization schemes described in Section \ref{numeric} on the first two data sets. As a measure of performance we use the complementarity gap defined as $gap = \bX\bullet \bS$ and when an analytic solution $\bX^*$ is available, i.e. when $\CC = \mathbb{I}_{Sym}$ or with $\CCiso$, the absolute error computed as $error = \|\bX-\bX^*\|$. 
We observe that the $gap$ can be interpreted as coaxiality measure, being 
$\bX^*\bullet \bS^* = \bY^* \bullet \CC(\bD-\bY^*)$.

We report in Table \ref{tab:risumedi} the average number of interior point iterations $iter_{Av}$, the average complementarity gap $gap_{Av}$, the average absolute error $error_{Av}$ and the total CPU time $cpu_{tot}$. The symbol `-' means that
that $error_{Av}$ is not available (transversely isotropic case).
We observe that both the accuracy measures $error$ and $gap$ are
in favour of the \name implementing the AHO direction as, in fact, using the NT
direction yields from 3 to 6 less order of accuracy. 
Moreover, the use of the NT direction implies, on average, a larger number of 
iterations when $\CCtra$ is used. Both issues are related to 
the fact that the Schur complement $\SS_{NT}$ becomes very-ill conditioned for small $\mu$
yielding poor interior point directions and, in several cases, runs are  prematurely stopped due to an error in the Cholesky factorization.  
Conversely, the nice condition number of $\MM_{AHO}$ as $\mu \rightarrow 0$ allows to compute  very accurate solutions. Finally, although aware that
evaluating the cpu time of Matlab codes is not always meaningful, especially when built-in functions are employed, we note that \name with the AHO direction is faster than with the NT one.

% Table generated by Excel2LaTeX from sheet 'Sheet1'
\begin{table}[htbp]
  \centering
  \caption{Aggregated results for the experiments on random $\bD$
  (for $l=1,10$) and Temple $\bD$ varying $\CC$: average number of iterations $iter_{Av}$, average 
  complementarity gap $gap_{Av}$, average absolute error $error_{Av}$ and the total CPU time $cpu_{tot}$.}
    \begin{tabular}{ll|rrrrrrrr}
    \toprule
          &       & \multicolumn{2}{c}{$iter_{Av}$} & \multicolumn{2}{c}{$gap_{Av}$} & \multicolumn{2}{c}{$error_{Av}$} & \multicolumn{2}{c}{$cpu_{tot}$ } \\
          \midrule
  random $\bD$        &  & \multicolumn{1}{c}{AHO} & \multicolumn{1}{c}{NT} & \multicolumn{1}{c}{AHO} & \multicolumn{1}{c}{NT} & \multicolumn{1}{c}{AHO}   & \multicolumn{1}{c}{NT}    & \multicolumn{1}{c}{AHO} & \multicolumn{1}{c}{NT} \\
          \midrule
   $\CC = \mathbb{I}_{Sym}$ & $l=1$     & 12    & 11    & 3E-16 & 1E-10 & \multicolumn{1}{r}{6E-15} & \multicolumn{1}{r}{6E-10} & 68.1  & 89.7 \\
          & $l=10$    & 11    & 10    & 1E-14 & 1E-08 & \multicolumn{1}{r}{1E-14} & \multicolumn{1}{r}{1E-08} & 63.5  & 91.6 \\
                    \midrule
    $\CCiso$ & $l=1$     & 12    & 11    & 3E-16 & 1E-10 & \multicolumn{1}{r}{5E-15} & \multicolumn{1}{r}{7E-10} & 64.2  & 84.9 \\  %(E=1.nu=0.1)
          & $l=10$    & 11    & 10    & 1E-14 & 1E-08 & \multicolumn{1}{r}{2E-14} & \multicolumn{1}{r}{6E-09} & 59.3  & 87.7 \\
                    \midrule
    $\CCtra$  & $l=1$     & 12    & 20    & 2E-15 & 8E-08 & -     & -    & 73.3  & 178 \\  %alpha = 8, 2, 0.8, 6, 9
          & $l=10$    & 12    & 22    & 7E-13 & 8E-06 & -     & -     & 78.8  & 199.4 \\
                    \bottomrule
         \multicolumn{10}{c}{} \\
    \toprule         
          &       & \multicolumn{2}{c}{$iter_{Av}$} & \multicolumn{2}{c}{$gap_{Av}$} & \multicolumn{2}{c}{$error_{Av}$} & \multicolumn{2}{c}{$cpu_{tot}$ } \\
          \midrule
      Temple $\bD$ &  & \multicolumn{1}{c}{AHO} & \multicolumn{1}{c}{NT} & \multicolumn{1}{c}{AHO} & \multicolumn{1}{c}{NT} & \multicolumn{1}{c}{AHO}   & \multicolumn{1}{c}{NT}    & \multicolumn{1}{c}{AHO} & \multicolumn{1}{c}{NT} \\
          \midrule

    $\CCiso$   &       & 11    & 11    & 8E-13 & 3E-11 & \multicolumn{1}{r}{2E-11} & \multicolumn{1}{r}{2E-10} & 78.9 & 110.6 \\
        $\CCtra$  &       & 11    & 17    & 4E-13 & 1E-10 & -     & -     & 19.5  & 41.9\\
    \bottomrule
    \end{tabular}%
  \label{tab:risumedi}%
\end{table}%

In order to deepen the analysis on the condition number of $\SS_{NT}$
and $\MM_{AHO}$, we randomly drew one tensor $\bD$ from the random data set and
one from the temple one. For both tensors, we plotted in Figures \ref{fig:cond_trans} and \ref{fig:cond_t_massa} the values of $\kappa(\SS_{NT})$
and $\kappa(\MM_{AHO})$ along the interior point (IPM) iterations, for both $\CCiso$ and $\CCtra$, together with the value of $1/\mu$ for a matter of comparison.
As expected $\kappa(\SS_{NT})$ grows as $1/\mu$, while $\kappa(\MM_{AHO})$ is 
constant both for $\CCiso$ and $\CCtra$ as discussed in Section \ref{condSM}.

%%%%%%%%%%%%%%%%%%%%%%%%%%%%%%%%%%%%%%%%%%%%%%%%%%%%%%%%%%%%%%%%%%%%%%%%%%%%%%%%
% COND RANDOM
%%%%%%%%%%%%%%%%%%%%%%%%%%%%%%%%%%%%%%%%%%%%%%%%%%%%%%%%%%%%%%%%%%%%%%%%%%%%%%%%%
	
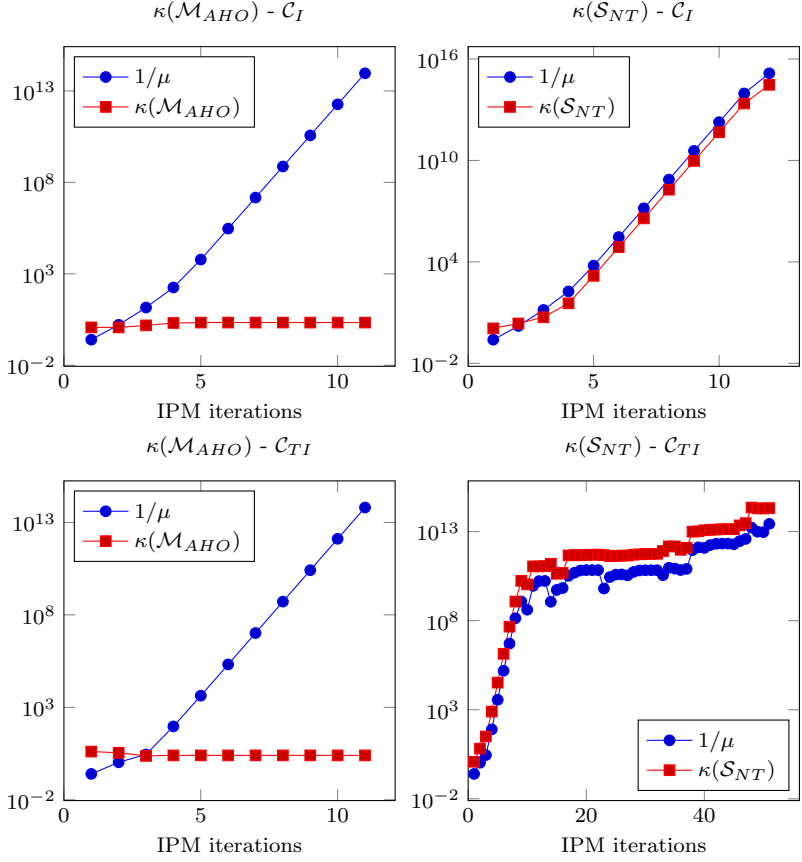
\begin{figure} \centering
	\begin{tikzpicture}
		\begin{semilogyaxis}[width=. 5* \textwidth, 
		   title = {$\kappa(\MM_{AHO})$ - $\CC_{I}$}, 
		   legend pos = north west,
		   legend cell align=left,
		   xlabel = {IPM iterations}, 
		   % ylabel + [Y LABEL],
		   height = .3 \textheight,
		   xmin = 0,
		   		 ]
		\foreach \j in {1,3} {
			% Mark = none per togliere i pallozzi
		  \addplot+ table[x index = 0, y index = \j] {./dati_randD_ii1_iso_M_AHO.dat};
		}
		\legend{ $1/\mu$,  $\kappa(\MM_{AHO})$};
		\end{semilogyaxis}
			\end{tikzpicture}~\begin{tikzpicture}
			\begin{semilogyaxis}[width=. 5* \textwidth, 
		   title = { $\kappa(\SS_{NT})$ - $\CC_{I}$}, 
		   legend pos = north west,
		   legend cell align=left,
		   xlabel = {IPM iterations}, 
		   % ylabel + [Y LABEL],
		   height = .3 \textheight,
		   xmin = 0,
		   		 ]
		\foreach \j in {1,3} {
			% Mark = none per togliere i pallozzi
		  \addplot+ table[x index = 0, y index = \j] {./dati_randD_ii1_iso_H_NT.dat};
		}
		\legend{ $1/\mu$,   $\kappa(\SS_{NT})$};
		\end{semilogyaxis}
	\end{tikzpicture}\\
	
	\begin{tikzpicture}
		\begin{semilogyaxis}[width=. 5* \textwidth, 
		   title = {$\kappa(\MM_{AHO})$  - $\CCtra$}, 
		   legend pos = north west,
		   legend cell align=left,
		   xlabel = {IPM iterations}, 
		   % ylabel + [Y LABEL],
		   height = .3 \textheight,
		   xmin = 0,
		   		 ]
		\foreach \j in {1,3} {
			% Mark = none per togliere i pallozzi
		  \addplot+ table[x index = 0, y index = \j] {./dati_randD_ii1_trans_M_AHO.dat};
		}
		\legend{ $1/\mu$,  $\kappa(\MM_{AHO})$};
		\end{semilogyaxis}
			\end{tikzpicture}~\begin{tikzpicture}
			\begin{semilogyaxis}[width=. 5* \textwidth, 
		   title = {$\kappa(\SS_{NT})$ - $\CCtra$}, 
		   legend pos = south east,
		   legend cell align=left,
		   xlabel = {IPM iterations}, 
		   % ylabel + [Y LABEL],
		   height = .3 \textheight,
		   xmin = 0,
		   		 ]
		\foreach \j in {1,3} {
			% Mark = none per togliere i pallozzi
		  \addplot+ table[x index = 0, y index = \j] {./dati_randD_ii1_trans_H_NT.dat};
		}
		\legend{ $1/\mu$,   $\kappa(\SS_{NT})$};
		\end{semilogyaxis}
	\end{tikzpicture}
	\caption{Random $\bD$: condition number of $\SS_{NT}$ and $\MM_{AHO}$ along the interior point (IPM) iterations.}  \label{fig:cond_trans} %prima matrice
	\end{figure}	

%%%%%%%%%%%%%%%%%%%%%%%%%%%%%%%%%%%%%%%%%%%%%%%%%%%%%%%%%%%%%%%%%%%%%%%%%%%%%%%%%
% COND MASSA
%%%%%%%%%%%%%%%%%%%%%%%%%%%%%%%%%%%%%%%%%%%%%%%%%%%%%%%%%%%%%%%%%%%%%%%%%%%%%%%%%
\begin{figure} \centering
	\begin{tikzpicture}
		\begin{semilogyaxis}[width=. 5* \textwidth, 
		   title = {$\kappa(\MM_{AHO})$ - $\CC_{I}$}, 
		   legend pos = north west,
		   legend cell align=left,
		   xlabel = {IPM iterations}, 
		   % ylabel + [Y LABEL],
		   height = .3 \textheight,
		   xmin = 0,
		   		 ]
		\foreach \j in {1,3} {
			% Mark = none per togliere i pallozzi
		  \addplot+ table[x index = 0, y index = \j] {./dati_massa_ii170133_iso_M_AHO.dat};
		}
		\legend{ $1/\mu$,  $\kappa(\MM_{AHO})$};
		\end{semilogyaxis}
			\end{tikzpicture}~\begin{tikzpicture}
			\begin{semilogyaxis}[width=. 5* \textwidth, 
		   title = { $\kappa(\SS_{NT})$ - $\CC_{I}$}, 
		   legend pos = north west,
		   legend cell align=left,
		   xlabel = {IPM iterations}, 
		   % ylabel + [Y LABEL],
		   height = .3 \textheight,
		   xmin = 0,
		   		 ]
		\foreach \j in {1,3} {
			% Mark = none per togliere i pallozzi
		  \addplot+ table[x index = 0, y index = \j] {./dati_massa_ii170133_iso_H_NT.dat};
		}
		\legend{ $1/\mu$, $\kappa(\SS_{NT})$};
		\end{semilogyaxis}
	\end{tikzpicture}\\
	
	\begin{tikzpicture}
		\begin{semilogyaxis}[width=. 5* \textwidth, 
		   title = {$\kappa(\MM_{AHO})$  - $\CCtra$}, 
		   legend pos = north west,
		   legend cell align=left,
		   xlabel = {IPM iterations}, 
		   % ylabel + [Y LABEL],
		   height = .3 \textheight,
		   xmin = 0,
		   		 ]
		\foreach \j in {1,3} {
			% Mark = none per togliere i pallozzi
		  \addplot+ table[x index = 0, y index = \j] {./dati_massa_ii170133_trans_M_AHO.dat};
		}
		%\legend{ $1/\mu$, $1/\sqrt{\mu}$, $\kappa(\MM_{AHO})$};
		\legend{ $1/\mu$, $\kappa(\MM_{AHO})$};
		\end{semilogyaxis}
			\end{tikzpicture}~\begin{tikzpicture}
			\begin{semilogyaxis}[width=. 5* \textwidth, 
		   title = {$\kappa(\SS_{NT})$ - $\CCtra$}, 
	   legend pos = north west,
		   legend cell align=left,
		   xlabel = {IPM iterations}, 
		   % ylabel + [Y LABEL],
		   height = .3 \textheight,
		   xmin = 0,
		   		 ]
		\foreach \j in {1,3} {
			% Mark = none per togliere i pallozzi
		  \addplot+ table[x index = 0, y index = \j] {./dati_massa_ii170133_trans_H_NT.dat};
		}
		\legend{ $1/\mu$,   $\kappa(\SS_{NT})$};
		%\legend{ $1/\mu$, $1/\sqrt{\mu}$,  $\kappa(\SS_{NT})$};
		\end{semilogyaxis}
	\end{tikzpicture}
	\caption{Temple $\bD$: condition number of $\SS_{NT}$ and $\MM_{AHO}$ along the interior point (IPM) iterations.} \label{fig:cond_t_massa} %(scelta random ii = 170133)
	\end{figure}

In order to interpret the results discussed above with a further tool,
we report in Figure \ref{fig:massabp} the boxplots of the runs performed for 
the Temple $\bD$ related to the $\log_{10}(gap)$ computed using the AHO and the NT directions for both  $\CCiso$ and $\CCtra$. 
The plots show that the maximum, i.e. the highest data point in the data set excluding any outliers, is larger using NT than using AHO. Moreover, the use of NT yields a large number of outliers with values above the maximum. 
We remark that these outliers correspond to runs prematurely stopped for a failure in the Cholesky factorization due to the ill-conditioning of the Schur complement $\SS_{NT}$.

%%%%%%%%%%%%%%%%%%%%%%%%%%%%%%%%%%%%%%%%%%%%%%%%%%%%%%%%%%%%%%%
	%BOX PLOT
\begin{figure}
\centering
%\subfloat[ $\log_{10}(error)$ - $\CCiso$]{
	  %\includegraphics[width=.45\textwidth, height=.6\textheight, %keepaspectratio]{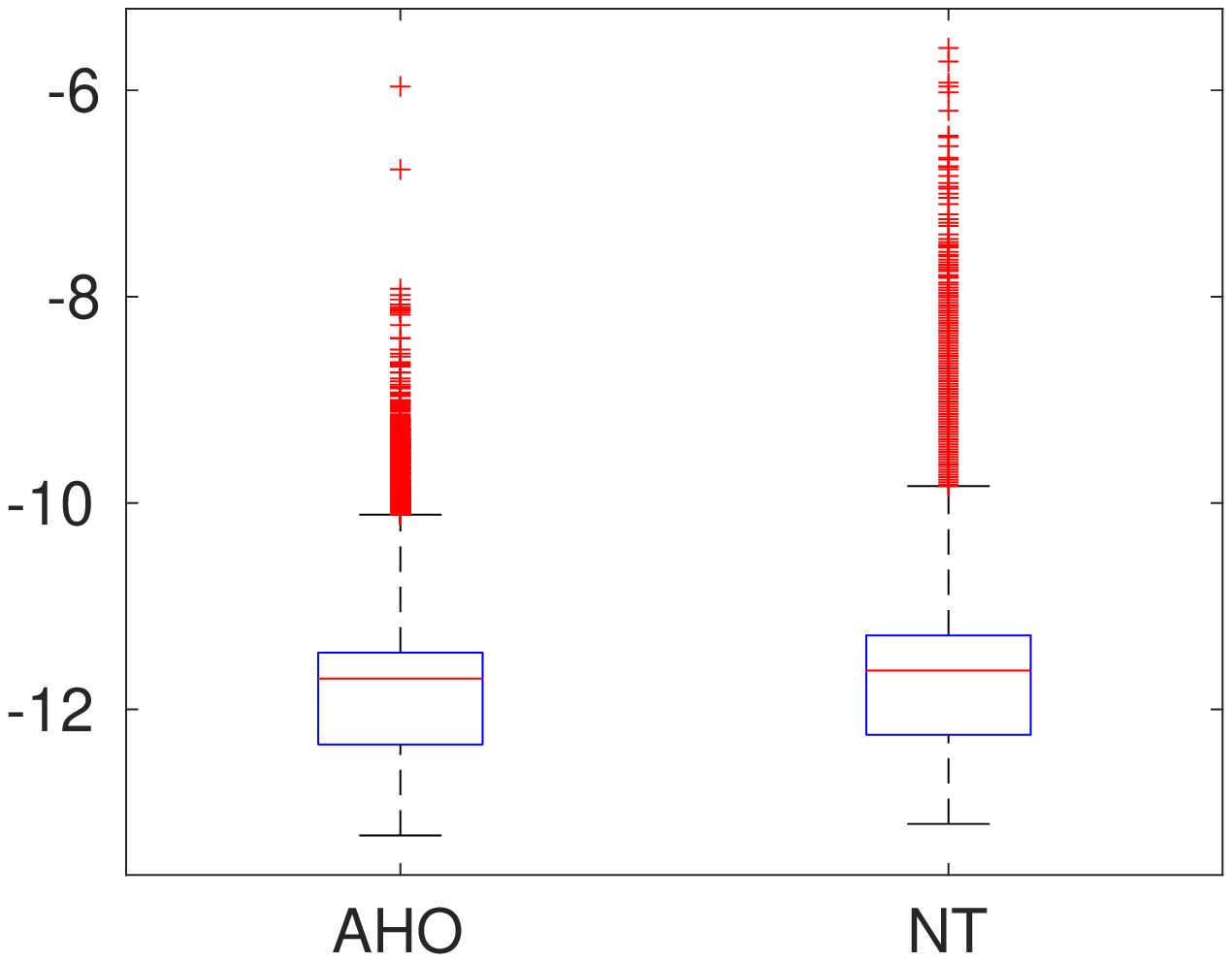}
	  %}
	  \subfloat[ $\log_{10}(gap)$ - $\CCiso$]{
	  \includegraphics[width=.45\textwidth, height=.6\textheight, keepaspectratio]{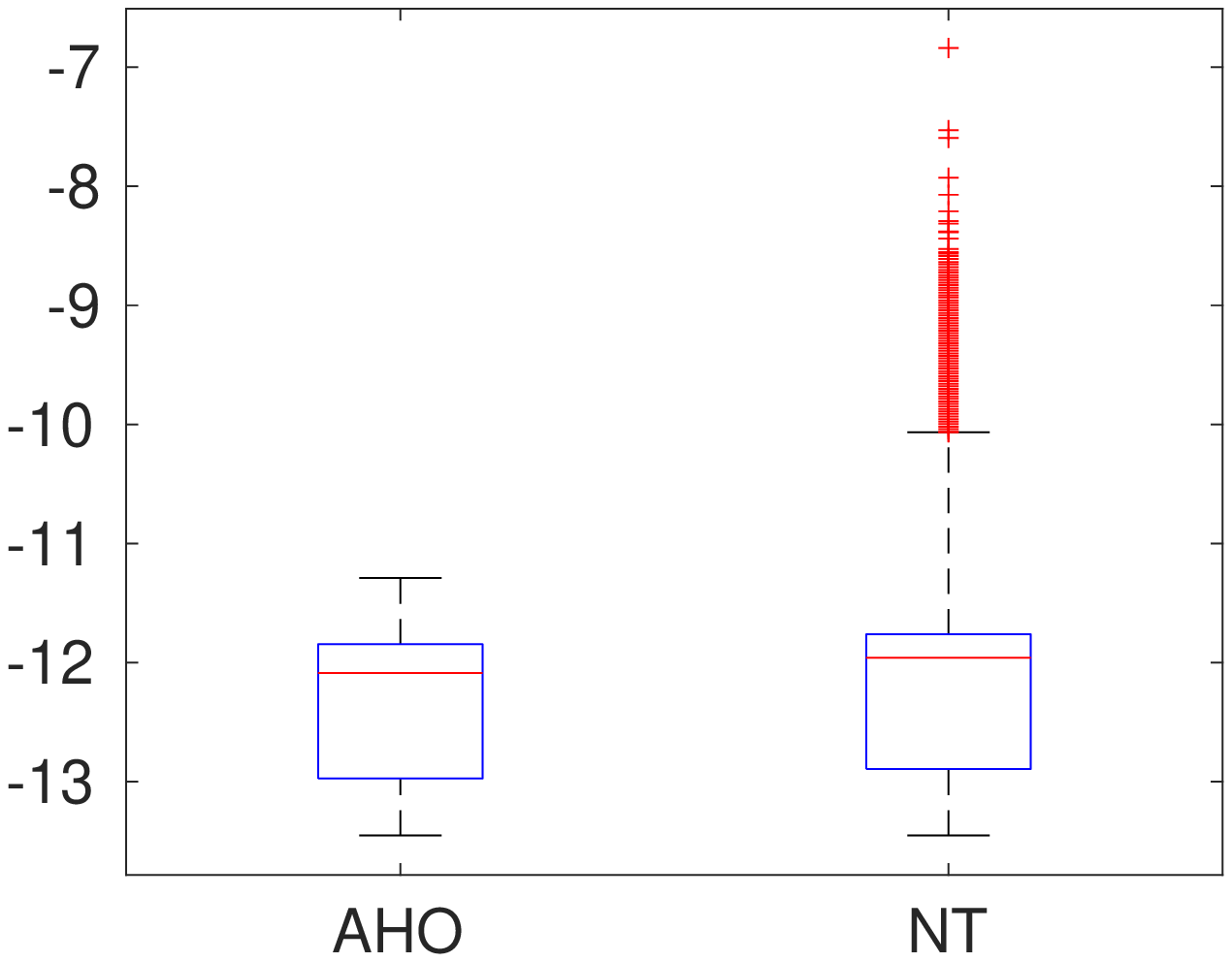}
	  }
	   \subfloat[ $\log_{10}(gap)$  - $\CCtra$]{
	  	  	  \includegraphics[width=.45\textwidth, height=.6\textheight, keepaspectratio]{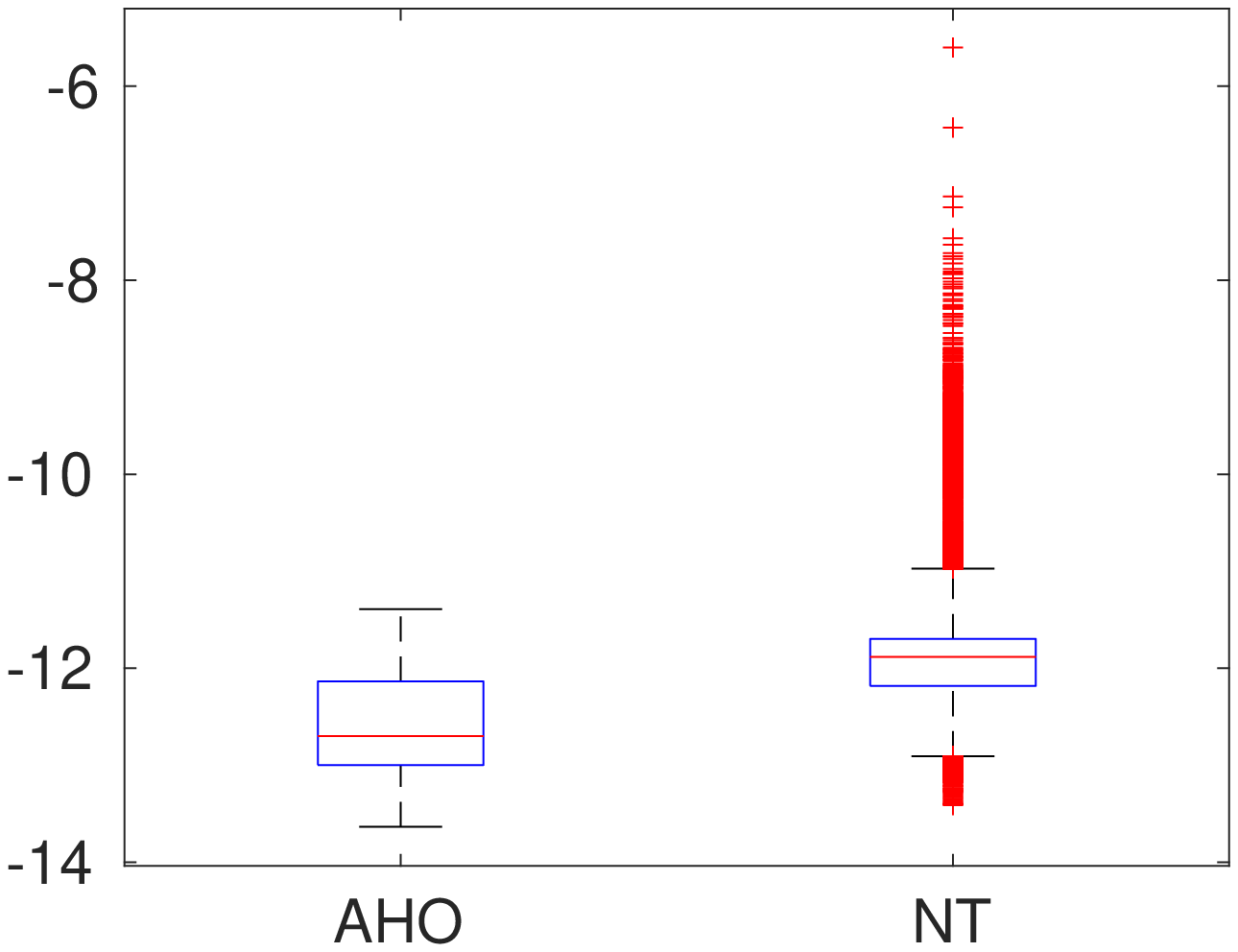}
	  	  	  }
\caption{Temple $\bD$: box-plots of the $\log_{10}$ of  the complementarity gap  $ gap = X\bullet S$ using $\CCiso$ (plot (a)) and using $\CCtra$ (plot (b)).
}
\label{fig:massabp}
\end{figure}

Concerning the third data set, Figure \ref{fig:evf3} reports the plot of the complementarity gap versus the angle $\theta_2$, for $\theta_1=\pi/4$,
and $(d_1,d_2,d_3)$ randomly chosen in $[-1,1]$, when $\CCtra$ is employed. The trend of the complementarity gap clearly shows how the deviation from coaxiality of $\bD$ and $\bX$ influences the coaxiality of $\bX$ and $\bS$ and then the accuracy of the numerical solution.
The use of the AHO direction seems to mitigate this effect.
The solutions calculated using the AHO and NT direction coincide for $\theta_2=0, \pi, 2\pi$.
 
 %%%%%%%%%%%%%%%%%%%%%%%%%%%%%%%
% f3
	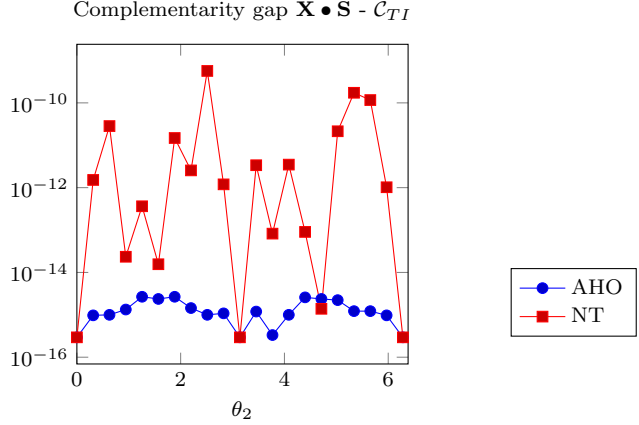
\begin{figure} \centering
	\begin{tikzpicture}
		\begin{semilogyaxis}[width=. 5* \textwidth, 
		   title = {Complementarity gap $\bX\bullet \bS$ - $\CCtra$}, 
		   legend style={at={(1.5,0.3)},anchor=north},
		   legend cell align=left,
		   xlabel = {$\theta_2$}, 
		   % ylabel + [Y LABEL],
		   height = .3 \textheight,
			 xmax = 2*pi+0.1,
		   xmin = 0,
		   		 ]
		\foreach \j in {1,2} {
			% Mark = none per togliere i pallozzi
		  \addplot+ table[x index = 0, y index = \j] {./dati_evf3.dat};
		}
		\legend{ AHO, NT  };
	 %    \legend{ CG iters in IPLR-GS for (\ref{sis2}), CG iters in IPLR-GS-P for (\ref{sis2})};
		\end{semilogyaxis}
	\end{tikzpicture}
% 	%  \includegraphics[width=0.6\textwidth]{./vbeam.eps}
	\caption{Parametric $\bD$: value of the complementarity gap at the computed solution varying $\theta_2$ in the definition of $\bD$ in \req{Dnonc}.} \label{fig:evf3}
	\end{figure}

\section{Conclusions} \label{sec:end} 

In this paper, we addressed a projection problem consisting in determining the projection of a symmetric second-order tensor onto the cone of negative semidefinite symmetric tensors with respect to the inner product defined by an assigned positive definite symmetric fourth-order tensor $\CC$. 
Applications of interests in solid mechanics strongly motivated this work 
supplying special forms for the tensors $\CC$ which require the numerical
solution of the projection problem. To this purpose, we considered an interior 
point method for a semidefinite programming reformulation of the problem
and discuss reliable implementations based on direct solvers for the linear algebra. Several numerical tests are performed to validate the proposed method
showing that the use of the AHO direction might be preferable to get accurate solutions.

The implementation of the algorithm in the finite element code NOSA-ITACA \cite{nosaitaca} developed at ISTI-CNR for the structural analysis
of masonry constructions will be the subject of future work
together with the analysis of a real-world case study of engineering interest.

\begin{acknowledgements}
%If you'd like to thank anyone, place your comments here
%and remove the percent signs.
The second author is a member of the {\em Gruppo Nazionale per il Calcolo Scientifico} (GNCS) of the Istituto Nazionale di Alta Matematica (INdAM) and this work was partially supported by INdAM-GNCS under Progetti di Ricerca 2020. 
\end{acknowledgements}

\begin{comment}
\section*{Declarations}

Some journals require declarations to be submitted in a standardised format. Please check the Instructions for Authors of the journal to which you are submitting to see if you need to complete this section. If yes, your manuscript must contain the following sections under the heading `Declarations':

\begin{itemize}
\item Funding
\item Conflict of interest/Competing interests (check journal-specific guidelines for which heading to use)
\item Ethics approval 
\item Consent to participate
\item Consent for publication
\item Availability of data and materials
\item Code availability 
\item Authors' contributions
\end{itemize}
\noindent
If any of the sections are not relevant to your manuscript, please include the heading and write `Not applicable' for that section. 
\end{comment}

\appendix

\section{Components of tensor $\CC$}
The components $\CC_{ijkl}$ of $\CC$ and $\CC_{ijkl}^{-1}$ of $\CC^{-1}$ with respect to an orthonormal basis $\mathsf{P}= (\mathbf{p}_{1},\mathbf{p}_{2},\mathbf{p}_{3})$ of $\mathcal{V}$ are introduced in Section \ref{sec1}. These components are reported in the following for the fourth-order tensors $\CC$ used in the numerical experiments.

In the isotropic case we have
\begin{equation*}
\CC_{1111}=\CC_{2222}=\CC_{3333}=\frac{E(1-\nu)}{(1+\nu)(1-2\nu)},
\label{Cijkliso1}
\end{equation*}
\begin{equation*}
\CC_{1122}=\CC_{1133}=\CC_{2233}= \frac{E\nu}{(1+\nu)(1-2\nu)},
\label{Cijkliso2}
\end{equation*}
\begin{equation*}
\CC_{1212}=\CC_{1313}=\CC_{2323}= \frac{E}{2(1+\nu)},
\label{Cijkliso3}
\end{equation*}
\begin{equation*}
\CC_{1111}^{-1}=\CC_{2222}^{-1}=\CC_{3333}^{-1}=\frac{1}{E},\ \ 
\CC_{1122}^{-1}=\CC_{1133}^{-1}=\CC_{2233}^{-1}= -\frac{\nu}{E},
\ \
\CC_{1212}^{-1}=\CC_{1313}^{-1}=\CC_{2323}^{-1}= \frac{1+\nu}{2E}.
\label{Cijkliso3-1}
\end{equation*}
The other components, if not zero, are given by relations (\ref{simmetrieC}) and (\ref{simmetrieC-1}).

In the transversely isotropic case, if $\mathbf{p}_{3}$ is the direction of transverse isotropy, then we have
\begin{equation*}
\CC_{1111}=\CC_{2222}=\alpha_2 + \alpha_5,\textup{ \ \ } \CC_{3333}=\alpha_1,
\label{CijklTI1}
\end{equation*}
\begin{equation*}
\CC_{1122}=\alpha_2-\alpha_5, \textup{ \ \ } \CC_{1133}=\CC_{2233}= \alpha_3,
\label{CijklTI2}
\end{equation*}
\begin{equation*}
\CC_{1212}=\alpha_5, \textup{ \ \ } \CC_{1313}=\CC_{2323}= \alpha_4,
\label{CijklTI3}
\end{equation*}
\begin{equation*}
\CC_{1111}^{-1}=\CC_{2222}^{-1}=\frac{ \alpha_1(\alpha_2+\alpha_5)-\alpha_3^2}{\delta},\textup{ \ \ }
\CC_{3333}^{-1}=\frac{ 4 \alpha_2 \alpha_5}{\delta},
\label{CijklTI-11}
\end{equation*}
\begin{equation*}
\CC_{1122}^{-1}=\frac{\alpha_3^2- \alpha_1(\alpha_2-\alpha_5)}{\delta},\textup{ \ \ }
\CC_{1133}^{-1}=\CC_{2233}^{-1}= \frac{-2 \alpha_3 \alpha_5}{\delta},
\label{CijklTI-12}
\end{equation*}
\begin{equation*}
\CC_{1212}^{-1}=\frac{ 1}{ 4\alpha_5} , \textup{ \ \ } \CC_{1313}^{-1}=\CC_{2323}^{-1}=\frac{ 1}{ 4\alpha_4},
\label{CijklTI-13}
\end{equation*}
with
\begin{equation*}
\delta=4\alpha_5(\alpha_1 \alpha_2 -\alpha_3^2),
\label{Det}
\end{equation*}
and the remaining components are defined by (\ref{simmetrieC}) and (\ref{simmetrieC-1}) or are equal to zero.

%%%%%%%%%%%%%%%%%%%%% COMMENTATO
\begin{comment}

%Different choices of $P$ gives different directions., e.g.:
\begin{itemize}
 \item {\bf AHO}: ${\mathbf{P}} = \bI$:
 $$\EE = \bI \odot \bS, \quad \FF = \bX  \odot \bI,\quad \bRc= \mu \bI -\frac{1}{2}(\bX \bS+\bS\bX ).
 $$
 $$\SS = (\bI \odot \bS)^{-1} (\bX  \odot \bI) + \CC^{-1}
 $$
 and
  $$\MM = \bI \odot \bS + (\bX  \odot \bI)\CC
 $$

 \item {\bf NT}: ${\mathbf{P}} = \bW^{-1/2}$ with $\bW$ being the geometric mean of $\bX $ and $\bS^{-1}$, i.e.
                 $$\bW= \bX ^{1/2}(\bX ^{1/2}\bS \bX ^{1/2})^{-1/2} \bX ^{1/2} =
                 \bS^{-1/2}(\bS^{1/2}\bX  \bS^{1/2})^{1/2} \bS^{-1/2}$$
                 ($\bW\bS\bW=\bX $).

%                This gives
 $$\EE = \bS \odot \bW^{-1}, \quad \FF = \bW \odot \bW,\quad \bRc= \mu \bS^{-1} - \bX .
 $$

  $$\SS = (\bS^{-1}\bW \odot \bI) + \CC^{-1}
 $$
 or
  $$\MM =\bS \odot \bW^{-1} + (\bW \odot \bW) \CC
 $$
Let $\bW = \bG \bG^T$, ${\mathbf{P}} = \bG^{-1}$

 $$\EE = \bG^{-1} \odot \bG^T\bS, \quad \FF = \bG^{-1}\bX \odot \bG^T,\quad \bRc= \sigma \mu \bI  - {\mathbf{D}}.
 $$
  $$\SS = (\bW \odot \bW) + \CC^{-1}
 $$
 and
  $$\MM = \bG^{-1} \odot \bG^T\bS + (\bG^{-1}\bX \odot \bG^T) \CC
 $$

\end{itemize}

\end{comment}

\def\baselinestretch{1}

\end{document}